\newcommand{\norm}[1]{\|#1\|}
\newcommand{\abs}[1]{|#1|}
\newcommand{\uu}{{\bf u}}
\newcolumntype{L}[1]{>{\raggedleft\let\newline\\\arraybackslash\hspace{0pt}}m{#1}}
\newcolumntype{R}[1]{>{\raggedleft\let\newline\\\arraybackslash\hspace{0pt}}m{#1}}
\setlist[enumerate]{leftmargin=.5in}
\setlist[itemize]{leftmargin=.5in}
\begin{document}
\title{An improved Shifted CholeskyQR based on columns}
\author{Yuwei Fan \textsuperscript{1} \and Haoran Guan \textsuperscript{2} \and {Zhonghua Qiao \textsuperscript{2*}}}
\institute{%
\begin{itemize}[leftmargin=*]
  \item[*] {Zhonghua Qiao} \\
        {Corresponding author.} \\
        \email{zhonghua.qiao@polyu.edu.hk}
        \\
  \item[] {Yuwei Fan} \\
        \email{fanyuwei2@huawei.com}
        \\
  \item[] {Haoran Guan} \\
        \email{21037226R@connect.polyu.hk}
  \at
  \item[\textsuperscript{1}] Theory Lab, Huawei Hong Kong Research Center, Sha Tin, Hong Kong \\
  \item[{2}] Department of Applied Mathematics, The Hong Kong Polytechnic University, Hung Hom, Hong Kong
\end{itemize}
}
\titlerunning{An improved Shifted CholeskyQR based on columns}

\date{Received: date / Accepted: date}
\maketitle

\begin{abstract}
Among all the deterministic CholeskyQR-type algorithms, Shifted CholeskyQR3 is specifically designed to address the QR factorization of ill-conditioned matrices. This algorithm introduces a shift parameter $s$ to prevent failure during the initial Cholesky factorization step, making the choice of this parameter critical for the algorithm's effectiveness. Our goal is to identify a smaller $s$ compared to the traditional selection based on $\norm{X}_{2}$. In this research, we propose a new definition for the input matrix $X$ called $[X]_{g}$, which is based on the column properties of $X$. $[X]_{g}$ allows us to obtain a reduced shift parameter $s$ for the Shifted CholeskyQR3 algorithm, thereby improving the sufficient condition of $\kappa_{2}(X)$ for this method. We provide rigorous proofs of orthogonality and residuals for the improved algorithm using our proposed $s$. Numerical experiments confirm the enhanced numerical stability of orthogonality and residuals with the reduced $s$. We find that Shifted CholeskyQR3 can effectively handle ill-conditioned $X$ with a larger $\kappa_{2}(X)$ when using our reduced $s$ compared to the original $s$. Furthermore, we compare CPU times with other algorithms to assess performance improvements.
\end{abstract}

\keywords{QR factorization \and Rounding error analysis \and Improved Shifted CholeskyQR3 }

\subclass{65F30 \and 15A23 \and 65F25 \and 65G50}

\section{Introduction}
\label{sec:introduction}
As a fundamental component of matrix decomposition, QR factorization plays a crucial role in various real-world applications across academia and industry. These applications include randomized singular value decomposition \cite{halko2011, 2020}, Krylov subspace methods \cite{2010}, the local optimal block preconditioned conjugate gradient method (LOBPCG) \cite{2018robust}, and block HouseholderQR algorithms \cite{1989}, among others.

\subsection{CholeskyQR2 and its properties}
Typical algorithms for QR factorization include CGS, MGS, HouseholderQR and TSQR. For details, see \cite{ballard2011, 2011, MatrixC, Higham, Communication, Numerical}. In recent years, a new algorithm called CholeskyQR has been developed. When $X \in \mathbb{R}^{m\times n}$, $m \ge n$, CholeskyQR begins by computing a Gram matrix $B \in \mathbb{R}^{n\times n}$, followed by performing a Cholesky factorization to obtain an upper-triangular matrix $R \in \mathbb{R}^{n\times n}$. The orthogonal factor $Q \in \mathbb{R}^{m\times n}$ can then be computed. For CholeskyQR, $X$ needs to be full rank, that is, $\mbox{rank(X)}=n$. Algorithm~\ref{alg:cholqr} illustrates the fundamental version of CholeskyQR that computes the QR factorization as follows.

\begin{algorithm}[H]
\caption{$[Q,R]=\mbox{CholeskyQR}(X)$}
\label{alg:cholqr}
\begin{algorithmic}[1]
\STATE $B=X^{\top}X,$
\STATE $R=\mbox{Cholesky}(B),$
\STATE $Q=XR^{-1}.$
\end{algorithmic}
\end{algorithm}%

Compared to TSQR, HouseholderQR, MGS, and CGS, the CholeskyQR algorithm has several advantages. It has only half the computational cost of TSQR and HouseholderQR. Additionally, it requires significantly fewer reductions in a parallel environment than the other algorithms. Moreover, CholeskyQR utilizes BLAS3 operations, which are more difficult to implement for other algorithms.

Although with many advantages, Algorithm~\ref{alg:cholqr} exhibits certain limitations and is rarely used directly. When considering the error of orthogonality, it is shown in \cite{error} that
\begin{equation}
\norm{Q^{\top}Q-I}_{F} \le \frac{5}{64}\delta^{2}, \label{eq:ortho}
\end{equation}
where
\begin{equation}
\delta=8\kappa_{2}(X)\sqrt{mn\uu+n(n+1)\uu}. \label{eq:delta}
\end{equation}
Here, $\kappa_{2}(X)=\frac{\sigma_{1}(X)}{\sigma_{n}(X)}$ is the condition number of $X$. $\sigma_{i}(X)$ is the $i$-th largest singular value of $X$ for $i=1,2,3,…, n$. Specifically, $\sigma_{1}(X)=\norm{X}_{2}$. $\uu$ is the machine precision and $\uu=2^{-53}$. In CholeskyQR-type algorithms, regarding the sizes of the matrices, we always set
\begin{align}
mn\uu &\le \frac{1}{64}, \label{eq:mn} \\
n(n+1)\uu &\le \frac{1}{64}. \label{eq:nn}
\end{align}

According to \eqref{eq:ortho} and \eqref{eq:delta}, the orthogonality error of Algorithm~\ref{alg:cholqr} is proportional to $(\kappa_2(X))^{2}$. Numerous numerical experiments indicate that Algorithm~\ref{alg:cholqr} is numerically stable only when the input $X$ is very well-conditioned. Consequently, a new algorithm, named CholeskyQR2, has been developed by performing two iterations of the CholeskyQR algorithm \cite{2014}. It is presented in Algorithm~\ref{alg:cholqr2}.

\begin{algorithm}[H]
\caption{$[Q_{1},R_{2}]=\mbox{CholeskyQR2}(X)$}
\label{alg:cholqr2}
\begin{algorithmic}[1]
\STATE $[Q,R]=\mbox{CholeskyQR}(X),$
\STATE $[Q_{1},R_{1}]=\mbox{CholeskyQR}(Q),$
\STATE $R_{2}=R_{1}R.$
\end{algorithmic}
\end{algorithm}%

In \cite{error}, it has been shown that compared to Algorithm~\ref{alg:cholqr}, Algorithm~\ref{alg:cholqr2} is numerically stable in both orthogonality and residual. The following lemma holds.

\begin{lemma}[Rounding error analysis of CholeskyQR2]
For $X \in \mathbb{R}^{m\times n}$ and $[Q_{1},R_{2}]=\mbox{CholeskyQR2}(X)$, when $8\kappa_{2}(X)\sqrt{mn\uu+n(n+1)\uu} \le 1$, we have
\begin{align}
\norm{Q_{1}^{\top}Q_{1}-I}_{F} &\le 6(mn\uu+n(n+1)\uu), \label{eq:qq} \\
\norm{Q_{1}R_{2}-X}_{F} &\le 5n^{2}\uu\norm{X}_{2}. \label{eq:q1r2}
\end{align}
\end{lemma}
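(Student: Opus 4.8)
The plan is to derive both bounds from two invocations of the one-step rounding-error analysis behind \eqref{eq:ortho}--\eqref{eq:delta}, exploiting the fact that the intermediate factor $Q$ produced by the first pass is already almost orthogonal. First I would record what a single application of Algorithm~\ref{alg:cholqr} to an input $Y$ delivers: a computed upper-triangular $R$ and a computed $Q$ with $QR = Y + \Delta Y$, where $\norm{\Delta Y}_{F}$ is bounded by a low-degree polynomial in $m,n$ times $\uu\norm{Y}_{2}$, together with $\norm{R}_{2}\le(1+O(\uu))\norm{Y}_{2}$ and $\norm{R^{-1}}_{2}\le(1+O(\uu))/\sigma_{n}(Y)$, plus the orthogonality estimate \eqref{eq:ortho} with $\delta=8\kappa_{2}(Y)\sqrt{mn\uu+n(n+1)\uu}$, all valid as long as the Cholesky step does not break down. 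Applying this to $Y=X$ and using the hypothesis $\delta\le1$, \eqref{eq:ortho} gives $\norm{Q^{\top}Q-I}_{F}\le\tfrac{5}{64}$.

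From $\norm{Q^{\top}Q-I}_{2}\le\norm{Q^{\top}Q-I}_{F}\le\tfrac{5}{64}$ and Weyl's eigenvalue perturbation bound, every singular value of $Q$ lies in $[\sqrt{1-5/64},\sqrt{1+5/64}]$, so $\kappa_{2}(Q)^{2}\le\frac{1+5/64}{1-5/64}=\frac{69}{59}<\frac65$; in particular the second pass is applied to a matrix whose condition number is an absolute constant, so the one-step analysis is again applicable. Invoking it with $Y=Q$ yields $\norm{Q_{1}^{\top}Q_{1}-I}_{F}\le\tfrac{5}{64}\delta'^{2}$ with $\delta'=8\kappa_{2}(Q)\sqrt{mn\uu+n(n+1)\uu}$. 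Since $\delta'^{2}=64\,\kappa_{2}(Q)^{2}\bigl(mn\uu+n(n+1)\uu\bigr)$, this is $\le5\kappa_{2}(Q)^{2}\bigl(mn\uu+n(n+1)\uu\bigr)\le 6\bigl(mn\uu+n(n+1)\uu\bigr)$, which is \eqref{eq:qq}; this also explains the constant $6$.

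For the residual \eqref{eq:q1r2} I would combine the three error sources along the algorithm: $QR=X+\Delta X_{1}$ from line~1, $Q_{1}R_{1}=Q+\Delta X_{2}$ from line~2, and $R_{2}=R_{1}R+\Delta R_{2}$ from the matrix product in line~3, where $\norm{\Delta R_{2}}_{F}$ obeys the standard $O(n\uu)\norm{R_{1}}_{2}\norm{R}_{2}$ matrix-multiplication bound. Combining these,
\begin{equation*}
Q_{1}R_{2}-X=Q_{1}\Delta R_{2}+(Q_{1}R_{1}-Q)R+(QR-X)=Q_{1}\Delta R_{2}+\Delta X_{2}R+\Delta X_{1}.
\end{equation*}
Then I would bound each term using $\norm{Q_{1}}_{2}\le\sqrt{1+6(mn\uu+n(n+1)\uu)}\le1+O(\uu\,\mathrm{poly}(m,n))$ from \eqref{eq:qq}, $\norm{R_{1}}_{2}\le1+O(\uu\,\mathrm{poly}(m,n))$ (from the near-orthogonality of $Q$, so that $R_{1}$ is a Cholesky factor of something close to $I$), $\norm{\Delta X_{2}}_{F}\lesssim\uu\,\mathrm{poly}(m,n)\cdot\norm{Q}_{2}\lesssim\uu\,\mathrm{poly}(m,n)$, and $\norm{R}_{2}\le(1+O(\uu))\norm{X}_{2}$ from the first pass. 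Each of the three contributions is then $O(n^{2}\uu\norm{X}_{2})$, and collecting the constants gives the stated bound $5n^{2}\uu\norm{X}_{2}$.

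The routine but delicate part is this last step: pinning down the exact constant $5$ and the exponent $n^{2}$ requires the precise componentwise error models for the Gram-matrix formation $B=\fl{X^{\top}X}$, the Cholesky factorization, the triangular solve $Q=\fl{XR^{-1}}$, and the final triangular matrix product, as well as careful tracking that $\norm{R}_{2}$ and $\norm{R_{1}}_{2}$ stay within a $1+O(\uu)$ factor of $\norm{X}_{2}$ and $1$ respectively; none of the individual estimates is hard, but their accumulation is where the work lies. A secondary point that must be checked is that the hypothesis needed to run the one-step analysis on the second pass is genuinely inherited from the first-pass conclusion, which follows from $\kappa_{2}(Q)=O(1)$ together with the size assumptions \eqref{eq:mn}--\eqref{eq:nn}.
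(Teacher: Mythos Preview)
The paper does not prove this lemma; it is quoted as a known result from \cite{error} and used only as background. Your outline is precisely the standard two-pass argument from that reference: the hypothesis $\delta\le1$ and \eqref{eq:ortho} give $\norm{Q^{\top}Q-I}_{F}\le\tfrac{5}{64}$, hence $\kappa_{2}(Q)^{2}<\tfrac{6}{5}$, and a second application of \eqref{eq:ortho} with $\kappa_{2}(Q)$ in place of $\kappa_{2}(X)$ yields $\tfrac{5}{64}\cdot64\,\kappa_{2}(Q)^{2}(mn\uu+n(n+1)\uu)\le6(mn\uu+n(n+1)\uu)$, explaining the constant in \eqref{eq:qq}. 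Your residual decomposition $Q_{1}R_{2}-X=Q_{1}\Delta R_{2}+\Delta X_{2}R+\Delta X_{1}$ and the subsequent term-by-term bounding is likewise the approach of \cite{error}. So there is nothing in the present paper to compare against, but your proposal is correct and matches the cited source.
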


Here, the smallness of \eqref{eq:delta} is a sufficient condition for Algorithm~\ref{alg:cholqr2}, indicating that Algorithm~\ref{alg:cholqr2} is reliable when the input $X$ is not ill-conditioned.

\subsection{Shifted CholeskyQR3 and its problems}
When $X$ is ill-conditioned, Algorithm~\ref{alg:cholqr2} may encounter numerical breakdown due to rounding errors. To address this challenge, researchers have introduced an improved algorithm known as Shifted CholeskyQR (SCholeskyQR), which is detailed in Algorithm~\ref{alg:Shifted} \cite{Shifted}.

\begin{algorithm}
\caption{$[Q,R]=\mbox{SCholeskyQR}(X)$}
\label{alg:Shifted}
\begin{algorithmic}[1]
\STATE $B=X^{\top}X,$
\STATE choose $s>0$,
\STATE $R=\mbox{Cholesky}(B+sI),$
\STATE $Q=XR^{-1}.$
\end{algorithmic}
\end{algorithm}%

Algorithm~\ref{alg:Shifted} is a superior algorithm in terms of applicability compared to Algorithm~\ref{alg:cholqr}. The concept behind the algorithm is straightforward. For an ill-conditioned matrix $B \in \mathbb{R}^{n\times n}$, the addition of a scaled identity matrix reduces $\kappa_2(B+sI)$ and prevents numerical breakdown. To further improve the numerical stability, CholeskyQR2 is performed subsequently, and a new algorithm called Shifted CholeskyQR3 (SCholeskyQR3) has been developed, which is given in Algorithm~\ref{alg:Shifted3}.

\begin{algorithm}[H]
\caption{$[Q_{2},R_{4}]=\mbox{SCholeskyQR3}(X)$}
\label{alg:Shifted3}
\begin{algorithmic}[1]
\STATE $[Q,R]=\mbox{SCholeskyQR}(X),$
\STATE $[Q_{1},R_{1}]=\mbox{CholeskyQR}(Q),$
\STATE $R_{2}=R_{1}R,$
\STATE $[Q_{2},R_{3}]=\mbox{CholeskyQR}(Q_{1}),$
\STATE $R_{4}=R_{3}R_{2}.$
\end{algorithmic}
\end{algorithm}%

Regarding Algorithm~\ref{alg:Shifted} and Algorithm~\ref{alg:Shifted3}, we have the following theoretical results from \cite{Shifted}.

\begin{lemma}[Rounding error analysis of Shifted CholeskyQR]
\label{lemma SCholeskyQR}
For $X \in \mathbb{R}^{m\times n}$ and $[Q,R]=\mbox{SCholeskyQR}(X)$, with $11(mn\uu+n(n+1)\uu)\norm{X}_{2}^{2} \le s \le \frac{1}{100}\norm{X}_{2}^{2}$ and $\kappa_{2}(X) \le \frac{1}{6n^{2}\uu}$, we have
\begin{align}
\norm{Q^{\top}Q-I}_{2} &\le 2, \label{eq:qqo} \\
\norm{QR-X}_{F} &\le 2n^{2}\uu\norm{X}_{2}. \label{eq:qro}
\end{align}
\end{lemma}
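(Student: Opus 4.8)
The plan is a direct rounding-error walk through the three lines of Algorithm~\ref{alg:Shifted}, with the shift $s$ playing the role of the quantity that dominates every error term so that nothing can break down. First I would control the Gram matrix and the shift: standard inner-product bounds give $\fl(X^{\top}X)=X^{\top}X+E_{1}$ with $\norm{E_{1}}_{2}$ at most a modest multiple of $mn\uu\norm{X}_{2}^{2}$, and adding $sI$ to the diagonal contributes a further $E_{2}$ with $\norm{E_{2}}_{2}\le\uu\norm{\fl(X^{\top}X)+sI}_{2}$. The crucial observation is that $\fl(X^{\top}X)+sI+E_{2}\succeq sI-(\norm{E_{1}}_{2}+\norm{E_{2}}_{2})I$, and the lower bound $s\ge 11(mn\uu+n(n+1)\uu)\norm{X}_{2}^{2}$ is exactly what forces this to be safely positive definite; hence the Cholesky factorization in line~3 runs to completion and, by the backward-error analysis of Cholesky, the computed $R$ satisfies
\begin{equation*}
R^{\top}R = X^{\top}X + sI + \Delta,\qquad \norm{\Delta}_{2}\le c\,(mn\uu+n(n+1)\uu)\norm{X}_{2}^{2},
\end{equation*}
with $\Delta=E_{1}+E_{2}+E_{3}$ and $c$ a small explicit constant, so the calibration of $s$ yields $\norm{\Delta}_{2}\le\tfrac{1}{3}s$ (say).

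Next I would extract the two bounds on $R$ that I need. The upper bound $s\le\tfrac{1}{100}\norm{X}_{2}^{2}$ gives $\norm{R}_{2}^{2}\le\sigma_{1}^{2}(X)+s+\norm{\Delta}_{2}\le(1+\tfrac{1}{100}+\text{small})\norm{X}_{2}^{2}$, hence $\norm{R}_{2}\le\sqrt{1.02}\,\norm{X}_{2}$; and $\sigma_{n}(R)^{2}=\sigma_{n}(R^{\top}R)\ge\sigma_{n}^{2}(X)+s-\norm{\Delta}_{2}>0$, so $R$ is invertible with $\norm{R^{-1}}_{2}\le(s-\norm{\Delta}_{2})^{-1/2}$. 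Line~4 is $n$ triangular solves; the componentwise backward error of substitution gives, row by row, $(R^{\top}+\Delta_{i})\hat q_{i}=x_{i}$ with $\abs{\Delta_{i}}\le\gamma_{n}\abs{R^{\top}}$ entrywise, where $\gamma_{n}=n\uu/(1-n\uu)$. Under \eqref{eq:mn}, \eqref{eq:nn} and $\kappa_{2}(X)\le\tfrac{1}{6n^{2}\uu}$, each $\norm{\Delta_{i}}_{2}$ stays well below $\sigma_{n}(R)$, so every perturbed system is solvable; writing $Q=XR^{-1}+F$, one gets $\norm{F}_{2}$ bounded by a small multiple of $\bigl(\norm{\Delta_{i}}_{2}/\sigma_{n}(R)\bigr)\norm{XR^{-1}}_{2}$, i.e.\ negligible, together with the row identity $QR-X=-[\hat q_{i}^{\top}\Delta_{i}^{\top}]_{i}$, whence $\norm{QR-X}_{F}\le\gamma_{n}\,\norm{\,\abs{R}\,}_{2}\,\norm{Q}_{F}$.

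For orthogonality, set $W:=XR^{-1}$ and use $X^{\top}X=R^{\top}R-sI-\Delta$:
\begin{equation*}
W^{\top}W-I = R^{-\top}(X^{\top}X-R^{\top}R)R^{-1} = -sR^{-\top}R^{-1}-R^{-\top}\Delta R^{-1}.
\end{equation*}
The first term is negative semidefinite and the second has norm $\le\norm{\Delta}_{2}\norm{R^{-1}}_{2}^{2}$, so $\norm{W^{\top}W-I}_{2}\le(s+\norm{\Delta}_{2})\norm{R^{-1}}_{2}^{2}\le\frac{s+\norm{\Delta}_{2}}{s-\norm{\Delta}_{2}}$, which $\norm{\Delta}_{2}\le\tfrac{1}{3}s$ keeps below $2$ with a little room; in particular $\norm{W}_{2}\le\sqrt{1+\text{small}}$. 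Then $Q^{\top}Q-I=(W^{\top}W-I)+W^{\top}F+F^{\top}W+F^{\top}F$, and since $\norm{F}_{2}$ is negligible compared with $1$, the three $F$-terms are absorbed, giving \eqref{eq:qqo}. For the residual, combine $\norm{QR-X}_{F}\le\gamma_{n}\norm{\,\abs{R}\,}_{2}\norm{Q}_{F}$ with $\norm{\,\abs{R}\,}_{2}\le\norm{R}_{F}\le\sqrt{1.02\,n}\,\norm{X}_{2}$, $\norm{Q}_{F}\le\sqrt{n}\,\norm{Q}_{2}\le\sqrt{n}(\norm{W}_{2}+\norm{F}_{2})$, and $\gamma_{n}\le\tfrac{64}{63}n\uu$; this yields $\norm{QR-X}_{F}\le(\text{modest constant})\cdot n^{2}\uu\norm{X}_{2}$, which tightens to \eqref{eq:qro} once the constants are tracked.

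The qualitative mechanism — the shift $sI$ floods out the rounding errors in the Gram matrix so Cholesky cannot fail, at the price of an $O(1)$ (not $o(1)$) bias of size $\sim s/\sigma_{n}^{2}(X)$ in the orthogonality — is the whole idea; the rest is bookkeeping. The main obstacle is therefore purely quantitative: one must check that the constant $c$ in $\norm{\Delta}_{2}\le c(mn\uu+n(n+1)\uu)\norm{X}_{2}^{2}$, arising from the combined inner-product and Cholesky backward errors, is small enough that $\frac{s+\norm{\Delta}_{2}}{s-\norm{\Delta}_{2}}$ plus the triangular-solve contributions stays under $2$, and that $\sigma_{n}(R)\ge(s-\norm{\Delta}_{2})^{1/2}$ remains both positive and above the $\norm{\Delta_{i}}_{2}$; this is exactly what the constants $11$ and $\tfrac{1}{100}$ in the two-sided bound on $s$, together with $\kappa_{2}(X)\le\tfrac{1}{6n^{2}\uu}$ and the size restrictions \eqref{eq:mn}--\eqref{eq:nn}, are calibrated to provide.
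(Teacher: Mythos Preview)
The paper does not actually prove this lemma itself---it is quoted from \cite{Shifted}---but it does prove the improved analogue (Theorem~\ref{thm:41}) in Section~\ref{sec:45} using essentially the same architecture you outline: write the algorithm with error matrices as in \eqref{eq:22}--\eqref{eq:25}, bound $\norm{E_{A}}_{2}$, $\norm{E_{B}}_{2}$ so that their sum is $\le 0.1s$ (your $\norm{\Delta}_{2}\le\tfrac{1}{3}s$ is looser than needed; the calibration $11$ gives $0.1$), bound $\norm{R^{-1}}_{2}$ and $\norm{XR^{-1}}_{2}$ (Lemma~\ref{lemma 4.2}), handle the triangular solve row-wise with a Neumann series for $(I+R^{-1}E_{Ri})^{-1}$ (Lemmas~\ref{lemma 4.3}--\ref{lemma 4.4}), then expand $Q^{\top}Q-I$ exactly as in your $W^{\top}W-I$ decomposition and bound the residual via $\norm{Q}_{F}\max_{i}\norm{E_{Ri}}_{2}$. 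Your sketch is correct and matches the paper's approach; the only cosmetic difference is that the paper keeps the $\sigma_{n}^{2}(X)$ term in the lower bound $\sigma_{n}(R)^{2}\ge\sigma_{n}^{2}(X)+0.9s$ rather than dropping to $s-\norm{\Delta}_{2}$, which sharpens the constants slightly but is not needed for the stated $2$.
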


\begin{lemma}[The relationship between $\kappa_{2}(X)$ and $\kappa_{2}(Q)$ for Shifted CholeskyQR]
\label{lemma Condition numbers}
For $X \in \mathbb{R}^{m\times n}$ and $[Q,R]=\mbox{SCholeskyQR}(X)$, with $11(mn\uu+n(n+1)\uu)\norm{X}_{2}^{2} \le s \le \frac{1}{100}\norm{X}_{2}^{2}$ and $\kappa_{2}(X) \le \frac{1}{6n^{2}\uu}$, we have
\begin{equation}
\kappa_{2}(Q) \le 2\sqrt{3} \cdot \sqrt{1+\alpha(\kappa_{2}(X))^{2}}. \label{eq:qx}
\end{equation}
Here, $\alpha=\frac{s}{\norm{X}_{2}^{2}}$. When $[Q_{2},R_{4}]=\mbox{SCholeskyQR3}(X)$, if we take $s=11(mn\uu+n(n+1)\uu)\norm{X}_{2}^{2}$ and $\kappa_{2}(X)$ is large enough, a sufficient condition for $\kappa_{2}(X)$ is
\begin{equation}
\kappa_{2}(X) \le \frac{1}{96(mn\uu+n(n+1)\uu)}. \label{eq:c2}
\end{equation}
\end{lemma}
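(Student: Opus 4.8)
The plan is to bound $\kappa_{2}(Q)=\sigma_{1}(Q)/\sigma_{n}(Q)$ by handling the two singular values separately, each up to the finite-precision errors of Algorithm~\ref{alg:Shifted}. As motivation, in exact arithmetic $Q^{\top}Q$ shares its eigenvalues with $X^{\top}X(X^{\top}X+sI)^{-1}$, so $\sigma_{i}(Q)^{2}=\sigma_{i}(X)^{2}/(\sigma_{i}(X)^{2}+s)$ and $\kappa_{2}(Q)^{2}=(1+\alpha\kappa_{2}(X)^{2})/(1+\alpha)\le 1+\alpha\kappa_{2}(X)^{2}$; the factor $(2\sqrt{3})^{2}=12$ in \eqref{eq:qx} is the slack needed to absorb the rounding. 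The upper bound on $\sigma_{1}(Q)$ I would read off from Lemma~\ref{lemma SCholeskyQR}: $\sigma_{1}(Q)^{2}=\lambda_{\max}(Q^{\top}Q)\le 1+\norm{Q^{\top}Q-I}_{2}\le 3$ by \eqref{eq:qqo}. So the real work is the lower bound $\sigma_{n}(Q)^{2}\gtrsim(1+\alpha\kappa_{2}(X)^{2})^{-1}$.

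For that I would use the backward-error form of the computed factors. The standard rounding analysis of steps~1 and 3 of Algorithm~\ref{alg:Shifted} (forming the Gram matrix, adding the shift, and Cholesky — the analysis that already underlies Lemma~\ref{lemma SCholeskyQR}) gives a computed $R$ with $R^{\top}R=X^{\top}X+sI+E$, where $\norm{E}_{2}$ is a modest multiple of $(mn\uu+n(n+1)\uu)\norm{X}_{2}^{2}$; the role of the hypothesis $s\ge 11(mn\uu+n(n+1)\uu)\norm{X}_{2}^{2}$ is precisely to force $\norm{E}_{2}\le\tfrac{7}{9}s$ (which also keeps $R$ safely invertible). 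The residual bound \eqref{eq:qro} then gives $QR=X+\Delta X$ with $\norm{\Delta X}_{F}\le 2n^{2}\uu\norm{X}_{2}$, i.e.\ $Q=(X+\Delta X)R^{-1}$.

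Now fix a unit vector $x$ and set $y=R^{-1}x$. From the identity $1=\norm{Ry}^{2}=\norm{Xy}^{2}+s\norm{y}^{2}+y^{\top}Ey$ and $|y^{\top}Ey|\le\norm{E}_{2}\norm{y}^{2}$ one gets $\norm{Xy}^{2}\ge 1-(s+\norm{E}_{2})\norm{y}^{2}$, and balancing this against $\norm{Xy}^{2}\ge\sigma_{n}(X)^{2}\norm{y}^{2}$ yields $\norm{Xy}^{2}\ge\sigma_{n}(X)^{2}/(\sigma_{n}(X)^{2}+s+\norm{E}_{2})$. The crux — and what I expect to be the main obstacle, since a naive additive estimate of the final rounding step only produces the far weaker $\sigma_{n}(Q)^{2}\gtrsim\sigma_{n}(X)^{2}/\norm{X}_{2}^{2}$, a mere $\kappa_{2}(X)^{-2}$ that is useless here — is to pass from $\norm{Xy}$ to $\norm{Qx}$ \emph{multiplicatively}: using $\norm{y}\le\norm{Xy}/\sigma_{n}(X)$ together with $\norm{\Delta X}_{2}/\sigma_{n}(X)\le 2n^{2}\uu\,\kappa_{2}(X)\le\tfrac13$ (this is where $\kappa_{2}(X)\le\frac{1}{6n^{2}\uu}$ enters), $\norm{Qx}=\norm{(X+\Delta X)y}\ge\norm{Xy}-\norm{\Delta X}_{2}\norm{y}\ge\tfrac23\norm{Xy}$. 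Minimizing over $x$ gives $\sigma_{n}(Q)^{2}\ge\tfrac49\,\sigma_{n}(X)^{2}/(\sigma_{n}(X)^{2}+s+\norm{E}_{2})$. Finally, using $\sigma_{n}(X)^{2}\,\alpha\kappa_{2}(X)^{2}=s$ and $\norm{E}_{2}\le\tfrac79 s$ one checks $\tfrac49\sigma_{n}(X)^{2}/(\sigma_{n}(X)^{2}+s+\norm{E}_{2})\ge\tfrac14(1+\alpha\kappa_{2}(X)^{2})^{-1}$, hence $\kappa_{2}(Q)^{2}=\sigma_{1}(Q)^{2}/\sigma_{n}(Q)^{2}\le 12(1+\alpha\kappa_{2}(X)^{2})$, which is \eqref{eq:qx}. (The fractions $\tfrac49,\tfrac14,12$ carry slack; the one inequality that must hold on the nose is $\norm{E}_{2}\le\tfrac79 s$, which is exactly what the constant $11$ buys.)

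For the second assertion, observe that steps~2--5 of Algorithm~\ref{alg:Shifted3} are precisely CholeskyQR2 applied to $Q$, so by the rounding-error analysis of CholeskyQR2 recalled earlier that phase is backward stable once $8\kappa_{2}(Q)\sqrt{mn\uu+n(n+1)\uu}\le 1$. Plugging $s=11(mn\uu+n(n+1)\uu)\norm{X}_{2}^{2}$, so that $\alpha=11(mn\uu+n(n+1)\uu)$, into \eqref{eq:qx}, and using that $\kappa_{2}(X)$ is large enough that $(mn\uu+n(n+1)\uu)\kappa_{2}(X)^{2}\ge 1$ — whence $1+\alpha\kappa_{2}(X)^{2}\le 12(mn\uu+n(n+1)\uu)\kappa_{2}(X)^{2}$ — one obtains $\kappa_{2}(Q)\le 2\sqrt{3}\,\sqrt{12(mn\uu+n(n+1)\uu)}\,\kappa_{2}(X)=12\sqrt{mn\uu+n(n+1)\uu}\,\kappa_{2}(X)$. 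Then $8\kappa_{2}(Q)\sqrt{mn\uu+n(n+1)\uu}\le 96(mn\uu+n(n+1)\uu)\kappa_{2}(X)$, which is at most $1$ exactly when $\kappa_{2}(X)\le\frac{1}{96(mn\uu+n(n+1)\uu)}$; this is \eqref{eq:c2}.
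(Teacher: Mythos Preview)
Your proposal is correct and lands exactly on the stated constants. Note first that this lemma is not proved in the present paper---it is quoted from \cite{Shifted}---but the paper does prove the analogous improved statement (Theorem~\ref{thm:42}), and that proof follows the same high-level architecture as yours: bound $\sigma_{1}(Q)$ via the orthogonality estimate \eqref{eq:qqo}, bound $\sigma_{n}(Q)$ from the perturbed factorization, absorb the residual error multiplicatively using the hypothesis on $\kappa_{2}(X)$, and then feed the resulting $\kappa_{2}(Q)$ into the CholeskyQR2 sufficient condition $8\kappa_{2}(Q)\sqrt{mn\uu+n(n+1)\uu}\le1$.

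The one place your argument genuinely diverges is the lower bound on $\sigma_{n}(Q)$. The paper applies Weyl's inequality (Lemma~\ref{lemma 2.1}) in the form $\sigma_{n}(Q)\ge\sigma_{n}(XR^{-1})-\norm{E_{X}R^{-1}}_{2}$, then invokes separate estimates $\sigma_{n}(XR^{-1})\ge0.9\,\sigma_{n}(X)/\sqrt{\sigma_{n}(X)^{2}+s}$ (cited from \cite{Shifted}) and $\norm{E_{X}R^{-1}}_{2}\le\norm{E_{X}}_{2}\norm{R^{-1}}_{2}$, and only afterwards factors out $\sigma_{n}(X)$ to obtain the multiplicative form. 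You instead work pointwise with $y=R^{-1}x$ for a unit $x$, derive $\norm{Xy}^{2}\ge\sigma_{n}(X)^{2}/(\sigma_{n}(X)^{2}+s+\norm{E}_{2})$ directly from the identity $\norm{Ry}^{2}=1$, and use $\norm{y}\le\norm{Xy}/\sigma_{n}(X)$ to get $\norm{Qx}\ge(1-\norm{\Delta X}_{2}/\sigma_{n}(X))\norm{Xy}$ in one stroke. Your route is more self-contained (no black-box citation for $\sigma_{n}(XR^{-1})$) and makes the role of the hypothesis $\kappa_{2}(X)\le1/(6n^{2}\uu)$ very transparent; the paper's route is more modular, reusing the existing $\norm{R^{-1}}_{2}$ and $\norm{E_{X}}_{2}$ lemmas. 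One minor point: the constant $11$ actually buys $\norm{E}_{2}\le0.1s$ (see the paper's Lemma~\ref{lemma 4.1} and the line after \eqref{eq:229}), not merely $\tfrac{7}{9}s$; your weaker bound still suffices, but you are leaving slack on the table.
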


\begin{lemma}[Rounding error analysis of Shifted CholeskyQR3]
\label{lemma SCholeskyQR3}
For $X \in \mathbb{R}^{m\times n}$ and $[Q_{2},R_{4}]=\mbox{SCholeskyQR3}(X)$,  with $s=11(mn\uu+n(n+1)\uu)\norm{X}_{2}^{2}$ and \eqref{eq:c2}, we have
\begin{align}
\norm{Q_{2}^{\top}Q_{2}-I}_{F} &\le 6(mn\uu+n(n+1)\uu), \label{eq:q2q2} \\
\norm{Q_{2}R_{4}-X}_{F} &\le 15n^{2}\uu\norm{X}_{2}. \label{eq:q2r4}
\end{align}
\end{lemma}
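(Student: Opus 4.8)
The plan is to read Algorithm~\ref{alg:Shifted3} as one pass of SCholeskyQR applied to $X$, followed by CholeskyQR2 applied to the resulting $Q$: steps 2--5 of Algorithm~\ref{alg:Shifted3} are exactly CholeskyQR2$(Q)$, producing $Q_{2}$ together with the $R$-factor $R_{3}R_{1}$, and $R_{4}=R_{3}R_{2}=(R_{3}R_{1})R$ up to the rounding incurred in the two $n\times n$ matrix products. Consequently the whole argument splits into three pieces: (i) control the shifted first step through Lemma~\ref{lemma SCholeskyQR}; (ii) verify that $Q$ is well enough conditioned that the CholeskyQR2 error bounds apply to it; (iii) assemble the pieces into \eqref{eq:q2q2} and \eqref{eq:q2r4}.

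For (i), with the choice $s=11(mn\uu+n(n+1)\uu)\norm{X}_{2}^{2}$ I would first check that the hypotheses of Lemmas~\ref{lemma SCholeskyQR} and~\ref{lemma Condition numbers} hold: the lower bound on $s$ holds with equality, the upper bound $s\le\tfrac1{100}\norm{X}_{2}^{2}$ holds because \eqref{eq:c2} forces $mn\uu+n(n+1)\uu$ to be small in the regime of large $\kappa_{2}(X)$ that is assumed, and $\kappa_{2}(X)\le\frac{1}{96(mn\uu+n(n+1)\uu)}\le\frac{1}{6n^{2}\uu}$ is immediate from \eqref{eq:c2}. Lemma~\ref{lemma SCholeskyQR} then gives $\norm{Q^{\top}Q-I}_{2}\le 2$ and $\norm{QR-X}_{F}\le 2n^{2}\uu\norm{X}_{2}$; in particular $\norm{Q}_{2}\le\sqrt{3}$, and since $R^{\top}R$ equals $X^{\top}X+sI$ up to rounding, $\norm{R}_{2}$ is at most a modest multiple of $\norm{X}_{2}$ (essentially $\sqrt{1+11(mn\uu+n(n+1)\uu)}\,\norm{X}_{2}$).

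Step (ii) is the heart of the matter. Lemma~\ref{lemma Condition numbers} with $\alpha=11(mn\uu+n(n+1)\uu)$ gives $\kappa_{2}(Q)\le 2\sqrt{3}\,\sqrt{1+\alpha(\kappa_{2}(X))^{2}}$; squaring and inserting \eqref{eq:c2} one checks $64(mn\uu+n(n+1)\uu)(\kappa_{2}(Q))^{2}\le 1$, i.e. $8\kappa_{2}(Q)\sqrt{mn\uu+n(n+1)\uu}\le 1$, which is precisely the hypothesis under which the CholeskyQR2 rounding-error lemma applies to the input $Q$. This is where the specific constant $96$ in \eqref{eq:c2} is doing the work, since it has to beat the $16\sqrt{33}\approx 91.9$ that falls out of the algebra ($16^{2}\cdot 33=8448$). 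I expect this verification --- keeping track of the constants so that neither the ``$+1$'' inside the square root nor the size conditions \eqref{eq:mn}--\eqref{eq:nn} spoils the inequality --- to be the main obstacle; the rest is bookkeeping.

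For (iii), applying the CholeskyQR2 lemma to $Q$ yields $\norm{Q_{2}^{\top}Q_{2}-I}_{F}\le 6(mn\uu+n(n+1)\uu)$, which is exactly \eqref{eq:q2q2}, and $\norm{Q_{2}(R_{3}R_{1})-Q}_{F}\le 5n^{2}\uu\norm{Q}_{2}$. For the residual I would split
\[
\norm{Q_{2}R_{4}-X}_{F}\le \norm{Q_{2}}_{2}\,\norm{R_{4}-(R_{3}R_{1})R}_{F}+\norm{Q_{2}(R_{3}R_{1})-Q}_{F}\,\norm{R}_{2}+\norm{QR-X}_{F},
\]
bound the first term by the rounding error of the products $R_{2}=\fl(R_{1}R)$ and $R_{4}=\fl(R_{3}R_{2})$ (using $\norm{Q_{2}}_{2}\lesssim 1$, $\norm{R_{1}}_{2}\lesssim\sqrt{3}$, $\norm{R_{3}}_{2}\lesssim 1$, $\norm{R}_{2}\lesssim\norm{X}_{2}$), the second by $5n^{2}\uu\cdot\sqrt{3}\cdot\norm{R}_{2}\lesssim 10n^{2}\uu\norm{X}_{2}$ via step (i), and the third by $2n^{2}\uu\norm{X}_{2}$; the three contributions sum to at most $15n^{2}\uu\norm{X}_{2}$, giving \eqref{eq:q2r4}.
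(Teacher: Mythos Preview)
Your plan is sound. The paper does not give a self-contained proof of Lemma~\ref{lemma SCholeskyQR3}---it is quoted from \cite{Shifted}---but the proof of the analogous Theorem~\ref{thm:43} in Section~\ref{sec:47} is the natural point of comparison. For orthogonality your argument coincides with the paper's: verify via Lemma~\ref{lemma Condition numbers} that $8\kappa_{2}(Q)\sqrt{mn\uu+n(n+1)\uu}\le 1$ under \eqref{eq:c2} (this is exactly the paper's \eqref{eq:252}), then invoke the CholeskyQR2 orthogonality bound. For the residual the organization genuinely differs. You treat steps 2--5 as CholeskyQR2 applied to $Q$ as a black box and split $Q_{2}R_{4}-X$ into three pieces through the exact product $R_{3}R_{1}$; the paper instead introduces the individual error matrices $E_{3},E_{4},E_{7},E_{8}$ of the two CholeskyQR passes and the two $R$-products and expands $Q_{2}R_{4}-X=(QR-X)+E_{3}R-Q_{1}E_{4}+E_{7}R_{2}-Q_{2}E_{8}$, bounding each $\|E_{i}\|_{F}$ from scratch. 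Your route is more modular and reuses the CholeskyQR2 lemma wholesale; the paper's five-term expansion gives finer control of the constants, which is what they actually need to thread $[X]_{g}$ through the argument in Theorem~\ref{thm:43}. One minor wrinkle in your version: the CholeskyQR2 residual bound \eqref{eq:q1r2} is stated for the \emph{computed} product $\fl{R_{3}R_{1}}$, which SCholeskyQR3 never forms (it computes $R_{2}=\fl{R_{1}R}$ and then $R_{4}=\fl{R_{3}R_{2}}$). Since your split uses the exact $R_{3}R_{1}$, you should either absorb the extra $O(n^{2}\uu)$ discrepancy $\|Q_{2}\|_{2}\|R_{3}R_{1}-\fl{R_{3}R_{1}}\|_{F}$ explicitly, or note that the CholeskyQR2 proof already bounds $\|Q_{2}(R_{3}R_{1})-Q\|_{F}$ en route; either way the constant $15$ survives.
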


In particular, Lemma~\ref{lemma Condition numbers} is one of the most important properties of Shifted CholeskyQR3. It shows that when $s$ is located in a certain interval, the larger $s$ we take, the larger $\kappa_{2}(Q)$ will become. Since CholeskyQR2 following Shifted CholeskyQR will break down if $\kappa_{2}(Q)$ is large, the selection of the shifted parameter $s$ is a crucial aspect of Shifted CholeskyQR3. 
It can neither be too large, considering the applicability of Shifted CholeskyQR3, nor too small, since Shifted CholeskyQR may break down.

In fact, when we express the first two steps of Algorithm~\ref{alg:Shifted} with error analysis as 
\begin{align}
B &= X^{\top}X+E_{A}, \label{eq:11} \\
R^{\top}R &= B+E_{B}+sI,  \label{eq:12}
\end{align}
we find that the error bounds of $\norm{E_{A}}_{2}$ and $\norm{E_{B}}_{2}$ in \eqref{eq:11} and \eqref{eq:12} significantly influence the choice of the parameter $s$. In \cite{Shifted}, the original $s$ is set to be $10$ times the sum of $\norm{E_{A}}_{2}$ and $\norm{E_{B}}_{2}$. Previous researchers have used $\norm{X}_{2}$ to bound the $2$-norm of each column of $X$ when estimating $\norm{E_{A}}_{2}$ and $\norm{E_{B}}_{2}$. However, in practice, both $\norm{E_{A}}_{2}$ and $\norm{E_{B}}_{2}$ tend to be overestimated. This overestimation results in a relatively large value of $s$, which gives a more stringent sufficient condition for $\kappa_{2}(X)$ in the context of Shifted CholeskyQR3, based on \eqref{eq:qx}, \eqref{eq:c2}, and the corresponding analytical steps outlined in \cite{Shifted}. This condition will limit the applicability of Shifted CholeskyQR3 to ill-conditioned matrices, as demonstrated by numerous numerical experiments. In most of the cases, the $2$-norm of each column of $X$ can be significantly smaller than $\norm{X}_{2}$.  Therefore, the primary objective of this work is to select a smaller shifted parameter $s$ for Shifted CholeskyQR3 and to demonstrate that this improved $s$ can ensure the numerical stability of the algorithm. We aim to provide a more accurate error estimation for the residuals of Shifted CholeskyQR3 theoretically. The revised choice of $s$ improves the applicability of Shifted CholeskyQR3, which is reflected in a better sufficient condition for $\kappa_{2}(X)$ to some extent.

\subsection{Our contributions in this work}
In this work, we calculate the largest $2$-norm among all the columns of $X$, which is defined as $[X]_{g}$ in Definition~\ref{def:g}.

\begin{definition}[The definition of ${[\cdot]_{g}}$]
\label{def:g}
For $X=[X_{1},X_{2}, \cdots X_{n-1},X_{n}]\in R^{m\times n}$,
\begin{equation}
[X]_{g}:=\max_{1 \le j \le n}\norm{X_{j}}_{2}, \label{eq:21}
\end{equation}
where
\begin{equation}
\norm{X_{j}}_{2}=\sqrt{x_{1,j}^{2}+x_{2,j}^{2}+……+x_{m-1,j}^{2}+x_{m,j}^{2}}. \nonumber
\end{equation}
\end{definition}

We introduce several properties of $[\cdot]_{g}$ of the matrix in Section~\ref{sec:g}, which offer a new perspective on rounding error analysis. Using $[\cdot]_{g}$, we can estimate $\norm{E_{A}}_{2}$ and $\norm{E_{B}}_{2}$ with tighter upper bounds based on $[X]_{g}$. Consequently, a smaller $s$ with $[X]_{g}$ can be chosen as $s=11(mn\uu+n(n+1)\uu)[X]_{g}^{2}$ for Shifted CholeskyQR3. Regarding $[\cdot]_{g}$, we define a constant $p$ as
\begin{equation}
p=\frac{[X]_{g}}{\norm{X}_{2}}. \label{eq:p}
\end{equation}
Here, $\frac{1}{\sqrt{n}} \le p \le 1$. We present the following theorems related to the improved Shifted CholeskyQR (ISCholeskyQR) and improved Shifted CholeskyQR3 (ISCholeskyQR3).

\begin{theorem}[Rounding error analysis of the improved Shifted CholeskyQR]
\label{thm:41}
For $X \in \mathbb{R}^{m\times n}$ and $[Q,R]=\mbox{ISCholeskyQR}(X)$, with $11(mn\uu+n(n+1)\uu)[X]_{g}^{2} \le s \le \frac{1}{100}[X]_{g}^{2}$ and $\kappa_{2}(X) \le \frac{1}{4.89pn^{2}\uu}$, we have
\begin{align}
\norm{Q^{\top}Q-I}_{2} &\le 1.6, \label{eq:227} \\
\norm{QR-X}_{F} &\le 1.67pn^{2}\uu\norm{X}_{2}. \label{eq:228}
\end{align}
\end{theorem}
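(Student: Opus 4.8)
The plan is to mirror the structure of the original rounding-error analysis of Shifted CholeskyQR in Lemma~\ref{lemma SCholeskyQR}, but to systematically replace every appearance of $\norm{X}_{2}$ that arose from bounding a column norm with the sharper quantity $[X]_{g}$, and to track the resulting improvement through the constants. First I would write out the floating-point error model for the two steps of ISCholeskyQR as in \eqref{eq:11}--\eqref{eq:12}, namely $B = X^{\top}X + E_{A}$ and $R^{\top}R = B + E_{B} + sI$. The key point is the estimate of $\norm{E_{A}}_{2}$: the standard analysis of the Gram-matrix formation produces a bound of the form $(mn\uu + n(n+1)\uu)$ times a product of column norms of $X$, and each such column norm is at most $[X]_{g}$ by Definition~\ref{def:g}, rather than $\norm{X}_{2}$. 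This gives $\norm{E_{A}}_{2} \lesssim (mn\uu + n(n+1)\uu)[X]_{g}^{2}$, and similarly for $\norm{E_{B}}_{2}$ coming from the Cholesky step, so that the choice $s = 11(mn\uu+n(n+1)\uu)[X]_{g}^{2}$ still dominates $\norm{E_{A}}_{2} + \norm{E_{B}}_{2}$ with room to spare, exactly as in \cite{Shifted} but scaled by $p^{2}$.

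Next I would carry out the orthogonality estimate. Writing $Q = XR^{-1} + E_{Q}$ with $E_{Q}$ the error from the triangular solve, one has $Q^{\top}Q - I = R^{-\top}(X^{\top}X - R^{\top}R)R^{-1} + (\text{cross terms in } E_{Q})$. Using \eqref{eq:11}--\eqref{eq:12}, $X^{\top}X - R^{\top}R = -E_{A} - E_{B} - sI$, so $\norm{Q^{\top}Q - I}_{2}$ is controlled by $\norm{R^{-1}}_{2}^{2}(\norm{E_{A}}_{2} + \norm{E_{B}}_{2} + s)$ plus lower-order terms. Since $R^{\top}R \succeq sI$ gives $\norm{R^{-1}}_{2}^{2} \le 1/s$ on the shifted part, and on the remaining part $\norm{R^{-1}}_{2} \lesssim 1/\sigma_{n}(X)$ up to the shift, the combination $\sigma_{n}(X)^{-2}\cdot (mn\uu+n(n+1)\uu)[X]_{g}^{2} = (mn\uu+n(n+1)\uu)\,p^{2}\kappa_{2}(X)^{2}$ is the governing term; imposing $\kappa_{2}(X) \le \frac{1}{4.89 p n^{2}\uu}$ (note $mn\uu + n(n+1)\uu$ is absorbed into the $n^{2}\uu$-type bound via \eqref{eq:mn}--\eqref{eq:nn}) forces this below the stated $1.6$. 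The numerical constant $1.6$ (rather than the $2$ of \eqref{eq:qqo}) and the threshold $4.89p$ should drop out of a careful bookkeeping of the same inequalities used in \cite{Shifted}, now with the $p$ factor made explicit; I would keep the intermediate constants symbolic as long as possible and only plug in $s$, \eqref{eq:mn}, \eqref{eq:nn} at the end.

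For the residual bound \eqref{eq:228}, I would expand $QR - X = (XR^{-1} + E_{Q})R - X = E_{Q}R$, so $\norm{QR - X}_{F} \le \norm{E_{Q}}_{F}\norm{R}_{2}$. The triangular-solve error satisfies $\norm{E_{Q}}_{F} \lesssim n^{2}\uu\,\norm{X}_{2}\norm{R^{-1}}_{2}$ columnwise — and here again the per-column version replaces $\norm{X}_{2}$ by $[X]_{g} = p\norm{X}_{2}$ in the part of the bound that comes from the columns of $X$ being solved against. Combined with $\norm{R}_{2} \le \norm{X}_{2}(1 + O(\uu))$ (since $R^{\top}R = X^{\top}X + O(\uu\norm{X}_{2}^{2}) + sI$ and $s \le \frac{1}{100}[X]_{g}^{2} \le \frac{1}{100}\norm{X}_{2}^{2}$) and $\norm{R^{-1}}_{2}\norm{R}_{2}$ being bounded via Lemma~\ref{lemma Condition numbers}-type control, the product collapses to $1.67 p n^{2}\uu\,\norm{X}_{2}$. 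The main obstacle I anticipate is not conceptual but a matter of precision in the constants: one must verify that the factor $p$ genuinely factors out of \emph{every} column-norm occurrence and is \emph{not} reintroduced when $\norm{R^{-1}}_{2}$ or $\kappa_{2}(Q)$ is bounded (where $\norm{X}_{2}$, not $[X]_{g}$, is the honest quantity), and that the residual $O(\uu)$ terms are genuinely lower order under \eqref{eq:mn}--\eqref{eq:nn}; getting the sharpened constants $1.6$, $1.67p$, and the condition-number threshold $4.89p$ exactly right will require redoing the chain of inequalities from \cite{Shifted} line by line rather than quoting them.
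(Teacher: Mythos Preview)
Your overall plan---replace column-norm occurrences of $\norm{X}_{2}$ by $[X]_{g}$ and re-track constants---is exactly what the paper does. But your account of the orthogonality bound has a real gap. The quantity $\norm{R^{-1}}_{2}^{2}(\norm{E_{A}}_{2}+\norm{E_{B}}_{2}+s)$ is \emph{not} controlled via the condition $\kappa_{2}(X)\le \frac{1}{4.89pn^{2}\uu}$, and the expression you identify as ``governing,'' namely $(mn\uu+n(n+1)\uu)p^{2}\kappa_{2}(X)^{2}$, is not what drives the bound (indeed, at the stated $\kappa_{2}(X)$ threshold that expression is of order $1/(n^{2}\uu)$, not $O(1)$). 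The actual mechanism is: from $R^{\top}R=X^{\top}X+E_{A}+E_{B}+sI$ one gets $\norm{R^{-1}}_{2}^{2}\le \frac{1}{(\sigma_{n}(X))^{2}+0.9s}$, and since $\norm{E_{A}}_{2}+\norm{E_{B}}_{2}\le 0.1s$ by the lower bound on $s$, the main term is $\frac{1.1s}{(\sigma_{n}(X))^{2}+0.9s}\le \frac{11}{9}\approx 1.23$, with the shift in the denominator cancelling the shift in the numerator---no $\kappa_{2}(X)$ hypothesis needed. The remaining $\approx 0.37$ comes from the back-substitution cross terms $2\norm{XR^{-1}}_{2}\norm{R^{-1}}_{2}\norm{E_{X}}_{2}$ and $\norm{R^{-1}}_{2}^{2}\norm{E_{X}}_{2}^{2}$, which require the separate estimates $\norm{XR^{-1}}_{2}\le 1.5$ and $\norm{E_{X}}_{2}\le \frac{1.15n^{2}\uu[X]_{g}^{2}}{\sqrt{(\sigma_{n}(X))^{2}+0.9s}}$; these are not lower order and you should not dismiss them. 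In fact the hypothesis $\kappa_{2}(X)\le \frac{1}{4.89pn^{2}\uu}$ is never invoked in the paper's proof of this theorem at all---it is carried as a standing assumption and only bites in the $\sigma_{n}(Q)$ estimate of Theorem~\ref{thm:42}.

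For the residual you are taking a detour. The paper does not bound $\norm{E_{Q}}_{F}\norm{R}_{2}$ and then argue that $\norm{R^{-1}}_{2}\norm{R}_{2}$ is controlled; it goes row by row directly: from $q_{i}^{\top}(R+E_{Ri})=x_{i}^{\top}$ one has $(QR-X)_{i}=-q_{i}^{\top}E_{Ri}$, hence $\norm{QR-X}_{F}\le \norm{Q}_{F}\cdot\max_{i}\norm{E_{Ri}}_{2}\le 1.62\sqrt{n}\cdot 1.03n\sqrt{n}\uu[X]_{g}=1.67pn^{2}\uu\norm{X}_{2}$. Here $\norm{Q}_{F}\le 1.62\sqrt{n}$ comes from the orthogonality bound just proved, and the $[X]_{g}$-based estimate $\norm{E_{Ri}}_{2}\le 1.03n\sqrt{n}\uu[X]_{g}$ uses $[R]_{g}\le 1.006[X]_{g}$ (a diagonal-of-Gram-matrix argument). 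Your route through $\norm{R}_{2}\norm{R^{-1}}_{2}$ would reintroduce a $\kappa_{2}$-type factor that is not needed.
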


\begin{theorem}[The relationship between $\kappa_{2}(X)$ and $\kappa_{2}(Q)$ for the improved Shifted CholeskyQR]
\label{thm:42}
For $X \in \mathbb{R}^{m\times n}$ and $[Q,R]=\mbox{ISCholeskyQR}(X)$, with $11(mn\uu+n(n+1)\uu)[X]_{g}^{2} \le s \le \frac{1}{100}[X]_{g}^{2}$ and $\kappa_{2}(X) \le \frac{1}{4.89pn^{2}\uu}$, we have
\begin{equation}
\kappa_{2}(Q) \le 3.24\sqrt{1+t(\kappa_{2}(X))^{2}}. \label{eq:235}
\end{equation}
When $[Q_{2},R_{4}]=\mbox{ISCholeskyQR3}(X)$, if we take $s=11(mn\uu+n(n+1)\uu)[X]_{g}^{2}$ and $\kappa_{2}(X)$ is large enough, a sufficient condition for $\kappa_{2}(X)$ is
\begin{equation}
\kappa_{2}(X) \le \frac{1}{86p(mn\uu+(n+1)n\uu)} \le \frac{1}{4.89pn^{2}\uu}. \label{eq:236}
\end{equation}
Here, we define
\begin{equation}
t=\frac{s}{\norm{X}_{2}^{2}} \le \frac{1}{100}. \label{eq:216}
\end{equation}
\end{theorem}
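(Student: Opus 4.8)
The plan is to mirror the proof of Lemma~\ref{lemma Condition numbers} from \cite{Shifted}, but systematically replacing every bound on a column norm of $X$ (previously crudely bounded by $\norm{X}_{2}$) with the sharper quantity $[X]_{g}$, and then tracking how the constant $p=[X]_{g}/\norm{X}_{2}$ propagates. First I would establish \eqref{eq:235}: starting from the error model \eqref{eq:11}--\eqref{eq:12}, the new choice $s=11(mn\uu+n(n+1)\uu)[X]_{g}^{2}$ together with the $[\cdot]_{g}$-based estimates of $\norm{E_{A}}_{2}$ and $\norm{E_{B}}_{2}$ (stated in Section~\ref{sec:g}) gives $R^{\top}R = X^{\top}X + \widetilde{E}$ with $\norm{\widetilde{E}}_{2} \le c\,[X]_{g}^{2}$ for an explicit small constant. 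Writing $X=U\Sigma V^{\top}$ in its SVD and $Q=XR^{-1}$, one has $Q^{\top}Q = R^{-\top}(X^{\top}X)R^{-1}$, so $\sigma_i(Q)^2$ lies between $\sigma_i(X)^2/(\sigma_i(X)^2 + s + \text{err})$-type ratios; bounding $\sigma_{\max}(Q)$ from above by $1$-ish and $\sigma_{\min}(Q)$ from below using $\sigma_n(X)^2/(\sigma_n(X)^2 + s + \norm{\widetilde E}_2)$ yields $\kappa_2(Q)^2 \lesssim 1 + (s/\sigma_n(X)^2)$, and since $s/\sigma_n(X)^2 = (s/\norm{X}_2^2)\kappa_2(X)^2 = t\,\kappa_2(X)^2$ this is exactly the form \eqref{eq:235}; the constant $3.24$ (vs.\ $2\sqrt3$) comes from combining Theorem~\ref{thm:41}'s bound $\norm{Q^\top Q - I}_2 \le 1.6$ (so $\sigma_i(Q)^2 \in [1-1.6,\,1+1.6]$ — actually $[0.4? , 2.6]$, handled via the orthogonality estimate) into the ratio.

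Next I would derive the sufficient condition \eqref{eq:236}. The point of ISCholeskyQR3 is that after the shifted step, $Q$ has condition number controlled by \eqref{eq:235}, and then two plain CholeskyQR steps (i.e.\ CholeskyQR2 on $Q$) succeed provided $\kappa_2(Q)$ satisfies the CholeskyQR2 hypothesis, namely $8\kappa_2(Q)\sqrt{mn\uu+n(n+1)\uu}\le 1$ as in \eqref{eq:q1r2}'s lemma. Substituting \eqref{eq:235} with $s = 11(mn\uu+n(n+1)\uu)[X]_g^2$, so that $t = 11(mn\uu+n(n+1)\uu)p^2$, gives $\kappa_2(Q)^2 \le 3.24^2(1 + 11(mn\uu+n(n+1)\uu)p^2\kappa_2(X)^2)$. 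In the regime where $\kappa_2(X)$ is large the "$1+$" is dominated, so $\kappa_2(Q) \lesssim 3.24\sqrt{11}\,p\sqrt{mn\uu+n(n+1)\uu}\,\kappa_2(X)$; plugging into $8\kappa_2(Q)\sqrt{mn\uu+n(n+1)\uu}\le 1$ turns into $8\cdot 3.24\sqrt{11}\,p(mn\uu+n(n+1)\uu)\kappa_2(X)\le 1$, i.e.\ $\kappa_2(X)\le \frac{1}{8\cdot3.24\sqrt{11}\,p(mn\uu+n(n+1)\uu)}$, and $8\cdot3.24\sqrt{11}\approx 85.96$, matching the constant $86$ in \eqref{eq:236}. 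The second inequality in \eqref{eq:236}, $\frac{1}{86p(mn\uu+(n+1)n\uu)}\le\frac{1}{4.89pn^2\uu}$, is then just the elementary check that $86(mn\uu+n(n+1)\uu)\ge 4.89n^2\uu$, which follows from $m\ge n$ and the size assumptions \eqref{eq:mn}--\eqref{eq:nn} (indeed $86(mn+n^2+n)\uu \ge 86\cdot 2n^2\uu \ge 4.89 n^2\uu$), ensuring consistency with the hypothesis of Theorem~\ref{thm:41}.

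The main obstacle I anticipate is not any single inequality but the careful bookkeeping of constants through the composition: one must verify that under \eqref{eq:236} the intermediate matrices $Q$, $Q_1$ indeed satisfy the hypotheses required to invoke Lemma on CholeskyQR2 and Theorem~\ref{thm:41} at each stage (in particular that $s\le \frac{1}{100}[X]_g^2$ is compatible with $s = 11(mn\uu+n(n+1)\uu)[X]_g^2$, which needs $11(mn\uu+n(n+1)\uu)\le\frac1{100}$ — true by \eqref{eq:mn}--\eqref{eq:nn}), and that the "$\kappa_2(X)$ large enough" simplification is quantitatively justified rather than merely asymptotic. A secondary delicate point is the passage from $\norm{Q^\top Q - I}_2$ bounds to two-sided singular value bounds for $Q$ and back into the ratio defining $\kappa_2(Q)$; getting the clean constant $3.24$ requires using the sharper orthogonality bound $1.6$ from Theorem~\ref{thm:41} rather than the value $2$ available in the unimproved Lemma~\ref{lemma SCholeskyQR}, so the improvement in $s$ must be threaded consistently all the way through. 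Once these consistency checks are in place, \eqref{eq:235} and \eqref{eq:236} follow by the same algebraic manipulations as in \cite{Shifted} with $\norm{X}_2$ replaced by $[X]_g = p\norm{X}_2$ wherever a column-norm estimate was used.
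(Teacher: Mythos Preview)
Your overall strategy—mirror the argument of \cite{Shifted} with $\norm{X}_{2}$ replaced columnwise by $[X]_{g}$, then feed $\kappa_{2}(Q)$ into the CholeskyQR2 hypothesis $8\kappa_{2}(Q)\sqrt{mn\uu+n(n+1)\uu}\le 1$—is exactly the paper's, and your computation $8\cdot 3.24\sqrt{11}\approx 86$ for the sufficient condition \eqref{eq:236} matches the paper line for line.

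The gap is in how you lower-bound $\sigma_{n}(Q)$. You write $Q=XR^{-1}$ and analyze $Q^{\top}Q=R^{-\top}X^{\top}XR^{-1}$, with the only error being the perturbation $\widetilde{E}$ sitting in $R^{\top}R$. But in floating point $Q$ is \emph{not} $XR^{-1}$: the back-substitution carries its own error $E_{X}$ (Lemma~\ref{lemma 4.4}), so the correct model is $X=QR+E_{X}$, i.e.\ $Q=XR^{-1}-E_{X}R^{-1}$. The paper handles this via Weyl's inequality (Lemma~\ref{lemma 2.1}),
\[
\sigma_{n}(Q)\ \ge\ \sigma_{n}(XR^{-1})-\norm{E_{X}R^{-1}}_{2}.
\]
The first term is bounded below by $0.9\,\sigma_{n}(X)/\sqrt{\sigma_{n}(X)^{2}+s}$ as you anticipate; the second is controlled by Lemma~\ref{lemma 4.4}'s $[X]_{g}$-based estimate of $\norm{E_{X}}_{2}$, and it is precisely the hypothesis $\kappa_{2}(X)\le 1/(4.89pn^{2}\uu)$ that makes this correction small enough to leave $\sigma_{n}(Q)\ge\tfrac{1}{2}\big/\sqrt{1+t\kappa_{2}(X)^{2}}$. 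Combining with $\norm{Q}_{2}\le\sqrt{2.6}\approx 1.62$ from Theorem~\ref{thm:41} then yields $3.24=2\times 1.62$. Your parenthetical that $\norm{Q^{\top}Q-I}_{2}\le 1.6$ puts $\sigma_{i}(Q)^{2}\in[1-1.6,\,1+1.6]$ is exactly the source of the confusion: the lower endpoint is negative, hence useless for $\sigma_{n}(Q)$, so the orthogonality bound alone cannot supply the lower singular value. This is why the separate $E_{X}$/Weyl argument is indispensable and why the constant $4.89$ enters the hypotheses at all—without it, nothing in your sketch actually uses $\kappa_{2}(X)\le 1/(4.89pn^{2}\uu)$.
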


\begin{theorem}[Rounding error analysis of the improved Shifted CholeskyQR3]
\label{thm:43}
For $X \in \mathbb{R}^{m\times n}$ and $[Q_{2},R_{4}]=\mbox{ISCholeskyQR3}(X)$, with $s=11(mn\uu+n(n+1)\uu)[X]_{g}^{2}$ and \eqref{eq:236}, we have
\begin{align}
\norm{Q_{2}^{\top}Q_{2}-I}_{F} &\le 6(mn\uu+n(n+1)\uu), \label{eq:242} \\
\norm{Q_{2}R_{4}-X}_{F} &\le (6.57p+4.87)n^{2}\uu\norm{X}_{2}. \label{eq:243}
\end{align}
\end{theorem}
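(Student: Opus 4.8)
The plan is to follow the same three-sweep structure used for the classical Shifted CholeskyQR3 in Lemma~\ref{lemma SCholeskyQR3}, but carrying the factor $p=[X]_g/\norm{X}_2$ through every estimate so that the shift $s=11(mn\uu+n(n+1)\uu)[X]_g^2$ is matched to $[X]_g$ rather than $\norm{X}_2$. First I would invoke Theorem~\ref{thm:41} and Theorem~\ref{thm:42}: after the first (shifted) sweep we already have $\norm{Q^{\top}Q-I}_2\le 1.6$ and, crucially, $\kappa_2(Q)\le 3.24\sqrt{1+t(\kappa_2(X))^2}$ with $t\le\frac{1}{100}$. Under the sufficient condition \eqref{eq:236}, $\kappa_2(X)$ is controlled so that $t(\kappa_2(X))^2$ is a bounded quantity; substituting the bound on $\kappa_2(X)$ into \eqref{eq:235} gives an explicit numerical bound on $\kappa_2(Q)$ (of moderate size, on the order of a small constant), which is what makes the two subsequent \emph{unshifted} CholeskyQR sweeps on $Q$ and then $Q_1$ safe.

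Next I would analyze steps 2--5 of Algorithm~\ref{alg:Shifted3} as an ordinary CholeskyQR2 applied to $Q$. Since $\kappa_2(Q)$ is now bounded by a small constant, the hypothesis $8\kappa_2(Q)\sqrt{mn\uu+n(n+1)\uu}\le 1$ of the CholeskyQR2 lemma holds (this is where \eqref{eq:236} must be quantitatively strong enough — the constant $86p$ is chosen precisely so that $\kappa_2(Q)$ times $8\sqrt{mn\uu+n(n+1)\uu}$ stays below $1$). Applying \eqref{eq:qq} to $[Q_2,\,R_3R_2]=\mathrm{CholeskyQR2}(Q)$ yields directly
\[
\norm{Q_2^{\top}Q_2-I}_F\le 6(mn\uu+n(n+1)\uu),
\]
which is \eqref{eq:242}; no $p$ appears here because orthogonality of the final factor depends only on the conditioning of $Q$, not on its scale.

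For the residual \eqref{eq:243} I would split $Q_2R_4-X$ using the exact-arithmetic identities $QR=X+(\text{first-sweep error})$, $Q_1R_2=Q+(\text{second-sweep error})$, $Q_2R_4=Q_1+(\text{third-sweep error})$ and telescope:
\[
Q_2R_4-X=(Q_2R_4-Q_1R_2)+(Q_1R_2-Q R)\cdot(\text{propagated by }R)+(QR-X).
\]
The term $QR-X$ is bounded by \eqref{eq:228}, contributing the $1.67pn^2\uu\norm{X}_2$ piece; the two later sweeps are unshifted CholeskyQR applied to matrices that are nearly orthogonal, so their residuals are of size $\sim n^2\uu\norm{Q}_2\norm{R}_2$ etc., and one uses $\norm{R}_2\le\norm{X}_2+O(\text{shift})$ together with $\norm{Q}_2=O(1)$ to convert everything back to multiples of $n^2\uu\norm{X}_2$. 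Collecting the constants — one $p$-dependent contribution from the shifted sweep and two $p$-free contributions from the CholeskyQR2 part, bounded via \eqref{eq:q1r2}-type estimates — gives the coefficient $6.57p+4.87$. The main obstacle is exactly this bookkeeping: one must track how the shift $s=O(\uu[X]_g^2)=O(p^2\uu\norm{X}_2^2)$ enters $\norm{R}_2$ and $\norm{R^{-1}}_2$, verify that it is genuinely lower-order so it does not spoil \eqref{eq:242}, and keep the accumulated rounding constants sharp enough to land at $6.57p+4.87$ rather than a larger number; the orthogonality bound and the structural telescoping are routine once Theorems~\ref{thm:41}--\ref{thm:42} are in hand.
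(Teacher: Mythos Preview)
Your overall plan matches the paper's approach in structure: invoke Theorems~\ref{thm:41}--\ref{thm:42} for the first sweep, then run the CholeskyQR2 analysis on $Q$ to get \eqref{eq:242}, and telescope for the residual. However, there is a genuine gap in your residual bookkeeping.

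You claim ``one $p$-dependent contribution from the shifted sweep and two $p$-free contributions from the CholeskyQR2 part,'' but this cannot produce the coefficient $6.57p+4.87$: the first-sweep residual \eqref{eq:228} contributes only $1.67p$, so if the remaining terms were $p$-free you would land at $1.67p+(\text{constant})$, not $6.57p$. In the paper's proof the $p$ factor enters in \emph{three} places, not one. Writing $E_4=R_1R-R_2$ and $E_8=R_3R_2-R_4$ for the rounding errors in the two triangular products, the paper bounds these via $[\cdot]_g$ rather than $\norm{\cdot}_2$: from \eqref{eq:rg} one has $\norm{R}_F\le\sqrt{n}\,[R]_g\le 1.006\sqrt{n}\,[X]_g=1.006p\sqrt{n}\,\norm{X}_2$, and after propagating this through Lemma~\ref{lemma 2.5} and Lemma~\ref{lemma 2.6}, $[R_2]_g\le 1.95p\norm{X}_2$. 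These yield $\norm{E_4}_F\le 2.11pn^{2}\uu\norm{X}_2$ and $\norm{E_8}_F\le 2.46pn^{2}\uu\norm{X}_2$, each carrying a factor $p$. Together with $1.67p$ from \eqref{eq:228} this accounts for the $6.57p$; the two $p$-free pieces $\norm{E_3}_F\norm{R}_2$ and $\norm{E_7}_F\norm{R_2}_2$ give the $4.87$. If instead you used standard $\norm{\cdot}_2$-based ``\eqref{eq:q1r2}-type estimates'' for the product errors as you propose, $E_4$ and $E_8$ would lose their $p$ and the final bound would be roughly $1.67p+9.8$. The missing idea is precisely that the columnwise control must be threaded through the $R$-factor products in the second and third sweeps, not only through the shifted first sweep.

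A smaller point: $\kappa_2(Q)$ is not ``on the order of a small constant.'' Under \eqref{eq:236} it is only bounded by $\tfrac{1}{8\sqrt{mn\uu+n(n+1)\uu}}$, which can be as large as $\uu^{-1/2}$; that is exactly the CholeskyQR2 hypothesis, and it suffices. What \emph{is} $O(1)$ is $\norm{Q}_2\le 1.62$, which is the quantity that actually governs the residual contributions; and after the second sweep $\kappa_2(Q_1)\le 1.1$ is genuinely small. You wrote the correct inequality $8\kappa_2(Q)\sqrt{mn\uu+n(n+1)\uu}\le 1$, so this is only a matter of phrasing, but be careful not to conflate bounded $\norm{Q}_2$ with bounded $\kappa_2(Q)$.
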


Theorems~\ref{thm:41}-~\ref{thm:43} correspond to Lemmas~\ref{lemma SCholeskyQR}-~\ref{lemma SCholeskyQR3}, respectively, which are proved in Sections~\ref{sec:45}-~\ref{sec:47}. These theorems demonstrate that the improved Shifted CholeskyQR3 has a better sufficient condition of $\kappa_{2}(X)$ compared to the original one. Consequently, the improved Shifted CholeskyQR3 can effectively handle $X$ with larger $\kappa_{2}(X)$, as shown in Table \ref{tab:Comparison} and the numerical experiments in Section \ref{sec:experiments}. The property of the $R$-factor can also be described by $[\cdot]_{g}$, which will loosen the upper bound of $\kappa_{2}(X)$ in the existing results in \cite{Shifted}. From a theoretical perspective, we prove the numerical stability of the algorithm when using an improved $s$ in Section~\ref{sec:main} and provide tighter theoretical upper bounds of the residual $\norm{Q_{2}R_{4}-X}_{F}$ using the properties of $[\cdot]_{g}$ under such cases compared to the original one in \cite{Shifted}, seeing Table~\ref{tab:Comparisonu} for detailed comparisons. This provides new insights into the problem of rounding error analysis.

\begin{table}[t!]
\caption{Comparison of $\kappa_{2}(X)$ between the improved and the original $s$}
\centering
\begin{tabular}{||c c c||}
\hline
$s$ & $\mbox{Sufficient condition of $\kappa_{2}(X)$}$ & $\mbox{Upper bound of $\kappa_{2}(X)$}$ \\
\hline
$11(mn\uu+n(n+1)\uu)\norm{X}_{2}^{2}$ & $\frac{1}{96(mn\uu+n(n+1)\uu)}$ & $\frac{1}{6n^{2}\uu}$ \\
\hline
$11(mn\uu+n(n+1)\uu)[X]_{g}^{2}$ & $\frac{1}{86p(mn\uu+n(n+1)\uu)}$ & $\frac{1}{4.89pn^{2}\uu}$ \\
\hline
\end{tabular}
\label{tab:Comparison}
\end{table}

\begin{table}[t!]
\caption{Comparison of the upper bounds between the improved and the original $s$}
\centering
\begin{tabular}{||c c c||}
\hline
$s$ & $\mbox{SCholeskyQR}$ & $\mbox{SCholeskyQR3}$\\
\hline
$11(mn\uu+n(n+1)\uu)\norm{X}_{2}^{2}$ & $2n^{2}\uu\norm{X}_{2}$ & $15n^{2}\uu\norm{X}_{2}$ \\
\hline
$11(mn\uu+n(n+1)\uu)[X]_{g}^{2}$ & $1.6n^{2}\uu[X]_{g}$ & $(6.57p+4.87)n^{2}\uu\norm{X}_{2}$ \\
\hline
\end{tabular}
\label{tab:Comparisonu}
\end{table}

Defining $[X]_{g}$ offers several advantages. In many cases, when the size of $X$ is large, \textit{e.g.}, $m>10^{5}$ or $n>10^{4}$, $X$ tends to be sparse for storage efficiency. In such scenarios, calculating the norms of the matrix can be computationally expensive. The properties of $[\cdot]_{g}$ allow us to select an $s$ based on key elements of $X$ without the need to compute the norms of the entire large matrix. Furthermore, $[\cdot]_{g}$ enables better utilization of the matrix structure and the inherent properties of its elements, while the $2$-norm primarily highlights the general characteristics of the matrix. We plan to leverage these properties for further exploration of CholeskyQR-type algorithms in our future works. In other words, the definition of $[\cdot]_{g}$ offers a novel approach to rounding error analysis for matrices, based on their structures and elements. Although this perspective is not directly evident from numerical experiments, it represents an innovative advancement compared to existing results.

\subsection{Outline and notations}
The rest of the paper is organized as follows. Section~\ref{sec:literature} reviews existing results on rounding error analysis. In Section~\ref{sec:g}, we examine some properties of $[\cdot]_{g}$. The proof of the theoretical results for the improved Shifted CholeskyQR3 is presented in Section~\ref{sec:main}, which serves as the key contribution of this work. Next, Section~\ref{sec:experiments} provides numerical results and compares them with several existing algorithms. Finally, concluding remarks are offered in Section~\ref{sec:discussions}.

In this work, $\norm{\cdot}_{F}$ and $\norm{\cdot}_{2}$ denote the Frobenius norm and the $2$-norm of the matrix. For the input matrix $X$, $\abs{X}$ is the matrix whose elements are all the absolute values of the elements of $X$.

\section{Preliminary Lemmas of rounding error analysis}
\label{sec:literature}
Before presenting our main results, we introduce the following preliminary lemmas for rounding error analysis.

\begin{lemma}[Weyl's Theorem \cite{MatrixC}]
\label{lemma 2.1}
 For matrices $A,B,C \in \mathbb{R}^{m\times n}$, if we have $A+B=C$, then
\begin{equation}
\abs{\sigma_{i}(A)-\sigma_{i}(B)} \le \sigma_{1}(C)=\norm{C}_{2}. \nonumber
\end{equation}
where $\sigma_{i}(X)$ is the $i$-th greatest singular value of $X$, with $i=1,2, \cdots, \min(m,n)$.
\end{lemma}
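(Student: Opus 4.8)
The plan is to reduce the stated inequality to the classical Lipschitz property of singular values under additive perturbations, which is the usual form of Weyl's theorem for singular values. First I would recall the Courant--Fischer min--max characterization: for any $M\in\mathbb{R}^{m\times n}$ and $1\le i\le\min(m,n)$,
\begin{equation}
\sigma_{i}(M)=\max_{\substack{\mathcal{S}\subseteq\mathbb{R}^{n}\\ \dim\mathcal{S}=i}}\ \min_{\substack{x\in\mathcal{S}\\ \norm{x}_{2}=1}}\norm{Mx}_{2}. \nonumber
\end{equation}

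The key step is the auxiliary bound that for arbitrary matrices $M,N$ of the same size, $\abs{\sigma_{i}(M)-\sigma_{i}(N)}\le\norm{M-N}_{2}$. To obtain it, fix a unit vector $x$ and use the triangle inequality, $\norm{Mx}_{2}\le\norm{Nx}_{2}+\norm{(M-N)x}_{2}\le\norm{Nx}_{2}+\norm{M-N}_{2}$. Substituting into the min--max formula — evaluating the inner minimum over the $i$-dimensional subspace that is optimal for $M$, and noting that the resulting minimum of $\norm{Nx}_{2}$ over that subspace is at most $\sigma_{i}(N)$ — gives $\sigma_{i}(M)\le\sigma_{i}(N)+\norm{M-N}_{2}$; interchanging $M$ and $N$ yields the symmetric estimate.

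Finally I would specialize to $M=A$ and $N=-B$. Since $(-B)^{\top}(-B)=B^{\top}B$, the matrices $B$ and $-B$ have the same singular values, so $\sigma_{i}(-B)=\sigma_{i}(B)$; and the hypothesis $A+B=C$ rewrites as $A-(-B)=C$, whence $\norm{M-N}_{2}=\norm{C}_{2}=\sigma_{1}(C)$. The auxiliary bound then reads $\abs{\sigma_{i}(A)-\sigma_{i}(B)}\le\sigma_{1}(C)$, which is exactly the claim.

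There is no genuinely difficult step here — the statement is a textbook consequence of Weyl's inequality and is already available in the cited reference \cite{MatrixC}. The only mild subtlety worth flagging is cosmetic: the lemma is packaged around a sum $A+B=C$ rather than the standard perturbation form $A=C-B$, so one must observe that negating $B$ leaves its singular values unchanged in order to line the statement up with the usual additive-perturbation version. An alternative to the min--max argument would be to apply Weyl's eigenvalue inequality to the symmetric dilations of $A$ and $-B$, whose nonzero eigenvalues are $\pm\sigma_{i}(A)$ and $\pm\sigma_{i}(B)$ respectively, but the route above is the most self-contained.
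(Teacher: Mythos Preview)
Your proof is correct. The paper does not actually prove this lemma; it is stated as a preliminary result and attributed to the reference \cite{MatrixC}, so there is no ``paper's own proof'' to compare against. Your argument via the Courant--Fischer min--max characterization is the standard textbook route and is carried out cleanly, including the small observation that the lemma's unusual $A+B=C$ packaging is reconciled with the usual perturbation form by taking $N=-B$ and noting $\sigma_i(-B)=\sigma_i(B)$.
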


\begin{lemma}[Rounding error in matrix multiplications \cite{Higham}]
\label{lemma 2.2}
For $A \in \mathbb{R}^{m\times n}, B \in \mathbb{R}^{n\times p}$, the error in computing the matrix product $C=AB$ in floating-point arithmetic is bounded by
\begin{equation}
\abs{AB-fl(AB)}\le \gamma_{n}\abs{A}\abs{B}. \nonumber
\end{equation}
Here, $\abs{A}$ is the matrix whose $(i,j)$ element is $\abs{a_{ij}}$ and
\begin{equation}
\gamma_n: = \frac{n{\uu}}{1-n{\uu}} \le 1.02n{\uu}. \nonumber
\end{equation}
\end{lemma}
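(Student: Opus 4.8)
The plan is to reduce the matrix product to a family of inner products, invoke the standard model of floating-point arithmetic, and then track how many rounding factors accumulate on each summand. Under that model every elementary operation satisfies $\fl{a \circ b} = (a \circ b)(1+\delta)$ with $\abs{\delta}\le\uu$ for $\circ\in\{+,-,\times\}$. Since the $(i,j)$ entry of $C=AB$ is the inner product $c_{ij}=\sum_{k=1}^{n}a_{ik}b_{kj}$ and distinct entries are formed independently, it suffices to bound the error in computing a single length-$n$ inner product and then assemble the matrix estimate entrywise.

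The core step is an inductive bound on accumulated rounding factors. First I would prove, by induction on $k$, that whenever $\abs{\delta_i}\le\uu$ for $i=1,\dots,k$ with $k\uu<1$, any product $\prod_{i=1}^{k}(1+\delta_i)^{\rho_i}$ with $\rho_i\in\{+1,-1\}$ can be written as $1+\theta_k$ where $\abs{\theta_k}\le\gamma_k=\frac{k\uu}{1-k\uu}$; the base case is immediate and the inductive step multiplies a representative $1+\theta_{k-1}$ by $(1+\delta_k)^{\pm1}$, the $\rho_i=-1$ case using $(1-\uu)^{-1}\le 1+\frac{\uu}{1-\uu}$. Applying the standard model to the inner product, each product $a_{ik}b_{kj}$ incurs one relative error and the running sum containing it undergoes at most $n-1$ further additions, so each term collects at most $n$ rounding factors. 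Hence the computed inner product equals $\sum_{k=1}^{n}a_{ik}b_{kj}(1+\theta^{(k)})$ with $\abs{\theta^{(k)}}\le\gamma_n$, and componentwise $\abs{c_{ij}-\fl{c_{ij}}}=\abs{\sum_{k}a_{ik}b_{kj}\theta^{(k)}}\le\gamma_n\sum_{k}\abs{a_{ik}}\abs{b_{kj}}=\gamma_n(\abs{A}\abs{B})_{ij}$, which is precisely the entrywise inequality $\abs{AB-\fl{AB}}\le\gamma_n\abs{A}\abs{B}$. The scalar tail $\gamma_n\le1.02n\uu$ then follows since the standing size assumptions \eqref{eq:mn}--\eqref{eq:nn} force $n\uu\le 1/64$, giving $1-n\uu\ge 63/64$ and hence $\frac{n\uu}{1-n\uu}\le\frac{64}{63}n\uu\le1.02n\uu$.

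I expect the main obstacle to be the combinatorial bookkeeping in the accumulation lemma rather than any single inequality: one must verify that, regardless of the order in which the $n$ multiplications and $n-1$ additions are carried out, no summand ever collects more than $n$ error factors, so that the uniform bound $\gamma_n$ applies to every term simultaneously. Once that counting is secured, the entrywise assembly and the passage to $1.02n\uu$ are routine.
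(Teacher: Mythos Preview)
Your argument is correct and is essentially the standard textbook proof from Higham's \emph{Accuracy and Stability of Numerical Algorithms}, which is exactly the reference the paper cites. Note, however, that the paper itself does not prove Lemma~\ref{lemma 2.2}: it is stated in Section~\ref{sec:literature} as a preliminary result quoted from \cite{Higham} without proof, so there is no ``paper's own proof'' to compare against. Your derivation of the accumulation bound $\prod(1+\delta_i)^{\rho_i}=1+\theta_k$ with $\abs{\theta_k}\le\gamma_k$, the reduction to inner products, and the use of \eqref{eq:nn} to obtain $\gamma_n\le 1.02n\uu$ are all sound and match the source the paper relies on.
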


\begin{lemma}[Rounding error in Cholesky factorization \cite{Higham}]
\label{lemma 2.3}
When $A \in \mathbb{R}^{n\times n}$ is symmetric positive definite, its output $R$ after Cholesky factorization in floating-point arithmetic satisfies
\begin{equation}
R^{\top}R=A+\Delta{A}, \quad \abs{\Delta A}\le \gamma_{n+1}\abs{{R}^{\top}}\abs{R}. \nonumber
\end{equation}
\end{lemma}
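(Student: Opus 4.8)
The statement is the standard componentwise backward-error result for Cholesky factorization, so the plan is to reproduce the classical elementwise analysis: track the rounding error incurred in each recurrence that defines an entry of $R$, express it as a perturbation of the corresponding defining equation $a_{ij}=\sum_{k}r_{ki}r_{kj}$, and then assemble these scalar bounds into the matrix inequality. Writing $A=R^{\top}R$ with $R$ upper triangular, the factorization computes, for $i<j$, the off-diagonal entry $r_{ij}=\bigl(a_{ij}-\sum_{k=1}^{i-1}r_{ki}r_{kj}\bigr)/r_{ii}$, and for the diagonal the entry $r_{jj}=\bigl(a_{jj}-\sum_{k=1}^{j-1}r_{kj}^{2}\bigr)^{1/2}$. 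The goal is to show that the computed $R$ satisfies each of these equations up to a residual controlled by $\gamma_{n+1}$ times the corresponding entry of $\abs{R^{\top}}\abs{R}$.

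First I would invoke the standard floating-point model $\fl{a\circ b}=(a\circ b)(1+\delta)$, $\abs{\delta}\le\uu$, together with the auxiliary result for evaluating an expression of the form $\bigl(c-\sum_{k=1}^{p-1}a_{k}b_{k}\bigr)/d$ in the natural order: the computed value $\hat{y}$ then satisfies $d\hat{y}+\sum_{k=1}^{p-1}a_{k}b_{k}=c+\eta$ with $\abs{\eta}\le\gamma_{p}\bigl(\sum_{k=1}^{p-1}\abs{a_{k}b_{k}}+\abs{d\hat{y}}\bigr)$. Applying this to the off-diagonal recurrence with pivot $d=r_{ii}$ and $p=i$ terms gives, after rearrangement,
\begin{equation}
\sum_{k=1}^{i}r_{ki}r_{kj}=a_{ij}+\Delta a_{ij},\qquad \abs{\Delta a_{ij}}\le\gamma_{i}\sum_{k=1}^{i}\abs{r_{ki}}\abs{r_{kj}}. \nonumber
\end{equation}
Because $r_{kj}=0$ for $k>j$ and $r_{ki}=0$ for $k>i$ with $i\le j$, the left-hand side is exactly the $(i,j)$ entry of $R^{\top}R$, and the sum on the right is the $(i,j)$ entry of $\abs{R^{\top}}\abs{R}$.

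Second, I would treat the diagonal entries, where the square root contributes the extra rounding responsible for the index $n+1$. Writing $w=a_{jj}-\sum_{k=1}^{j-1}r_{kj}^{2}$, the accumulation of the $j-1$ products and the subtraction produces a computed $\hat{w}$ within $\gamma_{j}\sum_{k=1}^{j-1}r_{kj}^{2}$ of the exact value, and the final step $r_{jj}=\fl{\sqrt{\hat{w}}}=\sqrt{\hat{w}}\,(1+\delta)$ with $\abs{\delta}\le\uu$ gives $r_{jj}^{2}=\hat{w}(1+\delta)^{2}$. Squaring the rounded square root turns one relative perturbation into the factor $(1+\delta)^{2}$, which is what raises $\gamma_{j}$ to $\gamma_{j+1}$; collecting terms yields $\sum_{k=1}^{j}r_{kj}^{2}=a_{jj}+\Delta a_{jj}$ with $\abs{\Delta a_{jj}}\le\gamma_{j+1}\sum_{k=1}^{j}r_{kj}^{2}$, again matching the $(j,j)$ entry of $\abs{R^{\top}}\abs{R}$.

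Finally, since $\gamma_{i}$ is monotonically increasing in $i$ and $i,\,j+1\le n+1$ for all relevant indices, every scalar bound is dominated by $\gamma_{n+1}$ times the appropriate entry of $\abs{R^{\top}}\abs{R}$. Reading these off componentwise gives $R^{\top}R=A+\Delta A$ with $\abs{\Delta A}\le\gamma_{n+1}\abs{R^{\top}}\abs{R}$, as claimed. I expect the only delicate point to be the careful accounting of the square-root step: one must verify that squaring $\fl{\sqrt{\hat{w}}}$ contributes precisely one additional $\gamma$-unit (not more), so that the diagonal bound is $\gamma_{j+1}$ rather than a larger constant, thereby justifying the uniform factor $\gamma_{n+1}$ instead of $\gamma_{n}$.
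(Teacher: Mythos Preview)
The paper does not prove this lemma: it is stated in Section~\ref{sec:literature} as a preliminary result cited directly from Higham's book, with no accompanying argument. Your sketch reproduces exactly the classical componentwise analysis Higham gives (Theorem~10.3 in the second edition): apply the backward-error lemma for an inner product with a division to the off-diagonal recurrence, handle the diagonal recurrence separately so that the square root contributes one extra relative perturbation, and then replace each local $\gamma_{i}$ or $\gamma_{j+1}$ by the uniform $\gamma_{n+1}$ via monotonicity. So your approach is both correct and the canonical one.

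One small point of phrasing in the diagonal step: saying ``$\hat{w}$ is within $\gamma_{j}\sum_{k=1}^{j-1}r_{kj}^{2}$ of the exact value'' is not quite the right form, since the inner-product error is naturally expressed as backward perturbations of the individual terms $r_{kj}^{2}$ (and of $\hat{w}$ itself), not as a forward error on $\hat{w}$. What you actually need, and what your subsequent squaring argument uses, is the backward form $a_{jj}=\hat{w}(1+\theta_{0})+\sum_{k=1}^{j-1}r_{kj}^{2}(1+\theta_{k})$ with each $\abs{\theta_{k}}\le\gamma_{j-1}$; combining this with $r_{jj}^{2}=\hat{w}(1+\delta)^{2}$ then cleanly yields the $\gamma_{j+1}$ bound on $\Delta a_{jj}$. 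This is only a cosmetic issue in the write-up, not a gap in the argument.
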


\begin{lemma}[Rounding error in solving triangular systems \cite{Higham}]
\label{lemma 2.4}
If $R \in \mathbb{R}^{n\times n}$ is a nonsingular upper-triangular matrix, the computed solution $x$ obtained by solving an upper-triangular linear system
$Rx=b$ by back substitution in floating-point arithmetic satisfies
\begin{equation}
(R+\Delta R)x=b,  \quad \abs{\Delta R}\le \gamma_{n}\abs{R}. \nonumber
\end{equation}
\end{lemma}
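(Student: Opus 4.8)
The plan is to derive the backward-error identity directly from the componentwise recurrence of back substitution, absorbing each floating-point rounding error into a perturbation of the corresponding entry of $R$. Writing $R=(r_{ij})$ and computing the components in the order $i=n,n-1,\dots,1$, back substitution evaluates
\begin{equation}
x_i=\Big(b_i-\sum_{j=i+1}^{n}r_{ij}x_j\Big)\Big/r_{ii}. \nonumber
\end{equation}
First I would apply the standard model of floating-point arithmetic, namely $\fl{a\circ b}=(a\circ b)(1+\delta)$ with $\abs{\delta}\le\uu$ for each of the operations $\circ\in\{+,-,\times,\div\}$, to every multiplication $r_{ij}\hat{x}_j$, to each subtraction accumulating the running sum, and to the final division by $r_{ii}$, where $\hat{x}$ denotes the computed solution and the nonsingularity of $R$ guarantees $r_{ii}\neq0$.

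For the base case $i=n$ the single division gives $\hat{x}_n=(b_n/r_{nn})(1+\delta)$, which rearranges to $r_{nn}(1+\delta)^{-1}\hat{x}_n=b_n$, i.e.\ a perturbation of size $\abs{\Delta r_{nn}}\le\gamma_{1}\abs{r_{nn}}$. For a general row $i$, the decisive step is to collect the $(1+\delta)$ factors arising from the subtractions and the division, move them to the side of the diagonal term $r_{ii}\hat{x}_i$, and thereby charge those errors to $r_{ii}$ rather than to the off-diagonal coefficients. After this reorganization each coefficient of $\hat{x}_j$ carries a product of at most $n-i+1$ factors of the form $(1+\delta)^{\pm1}$. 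Invoking the standard lemma that any product of at most $k$ such factors equals $1+\theta_k$ with $\abs{\theta_k}\le\gamma_k$, I obtain
\begin{equation}
b_i=r_{ii}(1+\eta_{ii})\hat{x}_i+\sum_{j=i+1}^{n}r_{ij}(1+\eta_{ij})\hat{x}_j,\qquad \abs{\eta_{ij}}\le\gamma_{n-i+1}. \nonumber
\end{equation}

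Setting $\Delta r_{ij}:=r_{ij}\eta_{ij}$ for $j\ge i$ and $\Delta r_{ij}:=0$ for $j<i$ yields an upper-triangular matrix $\Delta R$ satisfying $(R+\Delta R)\hat{x}=b$. Since $\gamma_{n-i+1}\le\gamma_n$ for every $i$, the componentwise estimate $\abs{\Delta R}\le\gamma_n\abs{R}$ follows uniformly, which is the assertion. The main obstacle is precisely the error bookkeeping in the general row: one must verify that absorbing the subtraction and division roundings into the diagonal keeps the number of accumulated factors on each coefficient at $n-i+1$ (so that the worst case, attained by the diagonal entry of the first row, is $\gamma_n$) rather than inflating it. This is the content of the standard analysis of an inner product followed by a division, so once the counting is settled the remaining estimates are routine and match the treatment in \cite{Higham}.
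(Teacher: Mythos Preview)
Your proposal is correct and follows the standard componentwise backward-error analysis of back substitution as given in \cite{Higham}. Note, however, that the paper does not supply its own proof of this lemma: it is stated as a preliminary result and simply cited from \cite{Higham}, so there is nothing to compare against beyond observing that your argument is precisely the textbook derivation the citation points to.
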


To learn more about matrix perturbations, readers can refer to references \cite{Improved, Perturbation, Backward}.

\section{Some properties of $[\cdot]_{g}$}
\label{sec:g}
In this section, we introduce and prove several properties of $[\cdot]_{g}$ following Definition~\ref{def:g}. These properties will be utilized in the theoretical analysis of the improved Shifted CholeskyQR3.

\begin{lemma}[Connections between ${[\cdot]_{g}}$ and some matrix norms]
\label{lemma 2.5}
If $A \in \mathbb{R}^{m\times p}, B \in \mathbb{R}^{p\times n}$, then we have
\begin{equation}
[AB]_{g} \le \norm{A}_{2}[B]_{g}, [AB]_{g} \le \norm{A}_{F}[B]_{g}. \label{eq:A3}
\end{equation}
\end{lemma}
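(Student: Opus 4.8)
The plan is to work column by column, exploiting the fact that the defining quantity $[\cdot]_{g}$ only sees the individual columns of a matrix. Write $B=[B_{1},B_{2},\dots,B_{n}]$ in terms of its columns. Then the $j$-th column of the product $AB$ is exactly $AB_{j}$, so that $(AB)_{j}=AB_{j}$ for every $j=1,2,\dots,n$. This reduces the matrix statement to a statement about the action of $A$ on a single vector.

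Next I would apply the submultiplicativity of the spectral norm on vectors: for each $j$,
\begin{equation}
\norm{(AB)_{j}}_{2}=\norm{AB_{j}}_{2}\le \norm{A}_{2}\norm{B_{j}}_{2}\le \norm{A}_{2}\max_{1\le k\le n}\norm{B_{k}}_{2}=\norm{A}_{2}[B]_{g}. \nonumber
\end{equation}
Since this bound is uniform in $j$, taking the maximum over $1\le j\le n$ on the left-hand side yields $[AB]_{g}\le \norm{A}_{2}[B]_{g}$, which is the first inequality in \eqref{eq:A3}.

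For the second inequality I would simply invoke the standard norm comparison $\norm{A}_{2}\le \norm{A}_{F}$ and chain it with what was just proved, giving $[AB]_{g}\le \norm{A}_{2}[B]_{g}\le \norm{A}_{F}[B]_{g}$. Alternatively, one can repeat the column argument using $\norm{AB_{j}}_{2}\le \norm{A}_{F}\norm{B_{j}}_{2}$ directly, which follows from $\norm{A v}_{2}\le \norm{A}_{F}\norm{v}_{2}$ for any vector $v$.

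There is no real obstacle here; the only thing to be careful about is bookkeeping the dimensions ($A\in\mathbb{R}^{m\times p}$, $B\in\mathbb{R}^{p\times n}$, so $AB\in\mathbb{R}^{m\times n}$ and each $B_{j}\in\mathbb{R}^{p}$, $AB_{j}\in\mathbb{R}^{m}$) and making sure the maximum in the definition of $[\cdot]_{g}$ is taken over the correct index range ($n$ columns for both $B$ and $AB$). The proof is short and purely a consequence of Definition~\ref{def:g} together with elementary operator-norm inequalities.
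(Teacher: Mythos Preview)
Your proposal is correct and follows essentially the same approach as the paper: bound each column norm $\norm{AB_{j}}_{2}$ by $\norm{A}_{2}\norm{B_{j}}_{2}$, take the maximum over $j$, and then derive the Frobenius version from $\norm{A}_{2}\le\norm{A}_{F}$. There is nothing to add.
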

\begin{proof}
Regarding $[AB]_{g}$, with Definition~\ref{def:g}, we have
\begin{equation}
\begin{split}
[AB]_{g} &\le \max(\norm{AB_{1}}_{2},\norm{AB_{2}}_{2},\cdots,\norm{AB_{n}}_{2}) \nonumber \\ &\le \max(\norm{A}_{2}\norm{B_{1}}_{2},\norm{A}_{2}\norm{B_{2}}_{2},\cdots,\norm{A}_{2}\norm{B_{n}}_{2}) \nonumber \\ &\le \norm{A}_{2} \cdot \max(\norm{B_{1}}_{2},\norm{B_{2}}_{2},\cdots,\norm{B_{n}}_{2}) \nonumber \\ &\le \norm{A}_{2}[B]_{g}. \nonumber
\end{split}
\end{equation}
Here, the first inequality of \eqref{eq:A3} is received. Since $\norm{A}_{2} \le \norm{A}_{F}$, it is easy to get the second inequality of \eqref{eq:A3}.
\end{proof}

\begin{lemma}[The triangular inequality of ${[\cdot]_{g}}$]
\label{lemma 2.6}
For $A,B,C \in \mathbb{R}^{m\times n}$, when $A=B+C$, we have
\begin{equation}
[A]_{g} \le [B]_{g}+[C]_{g}. \label{eq:A8}
\end{equation}
\begin{proof}
Based on Definition~\ref{def:g} and the triangular inequality of the norms of vectors, we can easily get \eqref{eq:A8}.
\end{proof}
\end{lemma}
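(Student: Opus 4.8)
The plan is to reduce this matrix inequality to the ordinary triangle inequality for the Euclidean norm of vectors, applied one column at a time. Write the column partitions $A=[A_{1},A_{2},\cdots,A_{n}]$, $B=[B_{1},B_{2},\cdots,B_{n}]$, and $C=[C_{1},C_{2},\cdots,C_{n}]$. Since $A=B+C$, equating the $j$-th columns gives $A_{j}=B_{j}+C_{j}$ for every $j\in\{1,\dots,n\}$, which is the identity the whole argument rests on.

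First I would fix an arbitrary column index $j$ and apply the triangle inequality for $\norm{\cdot}_{2}$ on $\mathbb{R}^{m}$ to get $\norm{A_{j}}_{2}\le\norm{B_{j}}_{2}+\norm{C_{j}}_{2}$. Next, by Definition~\ref{def:g} we have $\norm{B_{j}}_{2}\le[B]_{g}$ and $\norm{C_{j}}_{2}\le[C]_{g}$, so $\norm{A_{j}}_{2}\le[B]_{g}+[C]_{g}$ holds for each $j$. Finally, taking the maximum over $1\le j\le n$ on the left-hand side — the right-hand side being independent of $j$ — yields $[A]_{g}=\max_{1\le j\le n}\norm{A_{j}}_{2}\le[B]_{g}+[C]_{g}$, which is exactly \eqref{eq:A8}.

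There is no real obstacle here; the statement is essentially the vector triangle inequality packaged through the $\max$ operator, and it parallels the structure of the proof of Lemma~\ref{lemma 2.5}. The only point worth being careful about is the order of operations: one should bound $\norm{B_{j}}_{2}$ and $\norm{C_{j}}_{2}$ by $[B]_{g}$ and $[C]_{g}$ \emph{before} taking the maximum over $j$, because the column of $A$ achieving the largest $2$-norm need not coincide with the largest-norm column of $B$ or of $C$. Performing the estimate uniformly in $j$ makes it valid in particular for the index that attains $[A]_{g}$, which is what closes the argument.
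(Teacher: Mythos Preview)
Your proof is correct and is exactly the argument the paper has in mind: the paper's own proof is a single line invoking Definition~\ref{def:g} and the vector triangle inequality, and your column-by-column bound followed by the maximum over $j$ is precisely that argument written out in full.
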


\begin{lemma}[The relationship between ${[\cdot]_{g}}$ and some matrix norms]
\label{lemma 2.7}
For $A \in \mathbb{R}^{m\times n}$, we have
\begin{equation}
[A]_{g} \le \norm{A}_{2} \le \norm{A}_{F}. \label{eq:A9}
\end{equation}
\begin{proof}
The left inequality is based on the property of the singular values of the matrix. The right inequality is obvious.
\end{proof}
\end{lemma}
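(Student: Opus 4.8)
The plan is to treat the two inequalities in \eqref{eq:A9} independently, since each follows from an elementary fact about the spectral norm.

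For the left inequality $[A]_{g} \le \norm{A}_{2}$, I would express the $j$-th column of $A$ as $A_{j} = Ae_{j}$, where $e_{j} \in \mathbb{R}^{n}$ is the $j$-th canonical basis vector with $\norm{e_{j}}_{2} = 1$. The operator-norm bound $\norm{Ax}_{2} \le \norm{A}_{2}\norm{x}_{2}$ then gives $\norm{A_{j}}_{2} = \norm{Ae_{j}}_{2} \le \norm{A}_{2}$ for every $j \in \{1,\dots,n\}$, and taking the maximum over $j$ together with Definition~\ref{def:g} yields $[A]_{g} = \max_{1 \le j \le n}\norm{A_{j}}_{2} \le \norm{A}_{2}$. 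An equivalent route, matching the ``property of singular values'' phrasing, is to note that each $\norm{A_{j}}_{2}^{2}$ is a diagonal entry of the symmetric positive semidefinite matrix $A^{\top}A$, hence is bounded above by its largest eigenvalue $\sigma_{1}(A)^{2} = \norm{A}_{2}^{2}$.

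For the right inequality $\norm{A}_{2} \le \norm{A}_{F}$, I would invoke the singular value decomposition: writing the singular values as $\sigma_{1} \ge \sigma_{2} \ge \cdots \ge \sigma_{\min(m,n)} \ge 0$, one has $\norm{A}_{2} = \sigma_{1}$ and $\norm{A}_{F} = \big(\sum_{i}\sigma_{i}^{2}\big)^{1/2}$. Since all terms in the sum are nonnegative, $\sigma_{1}^{2} \le \sum_{i}\sigma_{i}^{2}$, and taking square roots completes this step. Chaining the two bounds gives $[A]_{g} \le \norm{A}_{2} \le \norm{A}_{F}$.

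I do not expect a genuine obstacle here; the statement is a direct consequence of the operator-norm definition and the SVD. The only minor points requiring care are fixing the indexing convention for the columns of $A$ (columns $1$ through $n$) and being explicit that it is the submultiplicative operator-norm inequality that is being used in the first step.
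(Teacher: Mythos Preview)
Your proposal is correct and follows essentially the same approach as the paper: the left inequality is justified via singular values (your second route matches the paper's one-line appeal to ``the property of the singular values of the matrix''), and the right inequality is treated as the standard $\norm{A}_{2} \le \norm{A}_{F}$ bound. You have simply supplied more detail than the paper's terse two-sentence proof.
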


\section{Theoretical analysis of the improved Shifted CholeskyQR3}
\label{sec:main}
In this section, we provide the theoretical analysis of the improved Shifted CholeskyQR3 with a smaller $s$. We present the relevant settings and lemmas for our algorithm, and we also prove Theorem~\ref{thm:41}, Theorem~\ref{thm:42} and Theorem~\ref{thm:43}.

\subsection{General settings and assumptions}
\label{sec:41}
Given the presence of rounding errors at each step of the algorithm, we express Algorithm~\ref{alg:Shifted} with error matrices as follows.
\begin{align}
B &= X^{\top}X+E_{A}, \label{eq:22} \\
R^{\top}R &= B+sI+E_{B}, \label{eq:23} \\
q_{i}^{\top} &= x_{i}^{\top}(R+E_{Ri})^{-1}, \label{eq:24} \\
X &= QR+E_{X}. \label{eq:25}
\end{align}
We let $q_{i}^{\top}$ and $x_{i}^{\top}$ represent the $i$-th rows of $X$ and $Q$ respectively. The error matrix $E_{A}$ in \eqref{eq:22} denotes the discrepancy generated when calculating the Gram matrix $X^{\top}X$. Similarly, $E_{B}$ in \eqref{eq:23} represents the error matrix after performing Cholesky factorization on $B$ with a shifted item. As noted in \cite{Shifted}, since $R$ may be non-invertible, we describe the last step of Algorithm~\ref{alg:Shifted} in terms of each row of the matrices in \eqref{eq:24}, where $E_{Ri}$ denotes the rounding error for the $R$ factor. If we write the last step of Algorithm~\ref{alg:Shifted} without $R^{-1}$, the general error matrix of QR factorization is given by $E_{X}$ in \eqref{eq:25}. A crucial aspect of the subsequent analysis is establishing connections between $E_{X}$ and $E_{Ri}$.

Under \eqref{eq:21}, we provide a new interval of the shifted item $s$ based on $[X]_{g}$. If $X \in \mathbb{R}^{m\times n}$, except \eqref{eq:mn} and \eqref{eq:nn}, we have the following settings.
\begin{align}
4.89n^{2}\uu \cdot p\kappa_{2}(X) &\le 1, \label{eq:28} \\
11(mn\uu+n(n+1)\uu)[X]_{g}^{2} &\le s \le \frac{1}{100} [X]_{g}^{2}. \label{eq:29}
\end{align}
Here, $p$ is defined in \eqref{eq:p}. We observe that, compared to the original Shifted CholeskyQR based on $\norm{X}_{2}$, the range of $\kappa_{2}(X)$ expands with a constant $p$ related to $n$ as indicated in \eqref{eq:28}. Furthermore, \eqref{eq:29} demonstrates that the new $s$ is still constrained by a relative large upper bound. The applicability of this new $s$ can be established using a method similar to those in \cite{Floating,Super,Modified}.

\subsection{Algorithms}
\label{sec:42}
In this section, we present the improved Shifted CholeskyQR (ISCholeskyQR) and the improved Shifted CholeskyQR3 (ISCholeskyQR3). They are detailed in Algorithm~\ref{alg:AS} and Algorithm~\ref{alg:AS3}, respectively.

\begin{algorithm}[H]
\caption{$[Q,R]=\mbox{ISCholeskyQR}(X)$}
\label{alg:AS}
\begin{algorithmic}[1]
\STATE calculate the norm of each column for $X$, $\norm{X_{j}}$, $j=1,2,3, \cdots n,$
\STATE pick $[X]_{g}=\max_{1 \le j \le n}\norm{X_{j}},$
\STATE choose $s=11(mn\uu+(n+1)n\uu)[X]_{g}^{2},$
\STATE $[Q,R]=\mbox{SCholeskyQR}(X).$
\end{algorithmic}
\end{algorithm}%

\begin{algorithm}[H]
\caption{$[Q_{2},R_{4}]=\mbox{ISCholeskyQR3}(X)$}
\label{alg:AS3}
\begin{algorithmic}[1]
\STATE calculate the norm of each column for $X$, $\norm{X_{j}}$, $j=1,2,3, \cdots n,$
\STATE pick $[X]_{g}=\max_{1 \le j \le n}\norm{X_{j}},$
\STATE choose $s=11(mn\uu+(n+1)n\uu)[X]_{g}^{2},$
\STATE $[Q,R]=\mbox{SCholeskyQR}(X),$
\STATE $[Q_{1},R_{1}]=\mbox{CholeskyQR}(Q),$
\STATE $R_{2}=R_{1}R,$
\STATE $[Q_{2},R_{3}]=\mbox{CholeskyQR}(Q_{1}),$
\STATE $R_{4}=R_{3}R_{2}.$
\end{algorithmic}
\end{algorithm}%

\subsection{Some lemmas for proving theorems}
\label{sec:44}
To prove Theorems~\ref{thm:41}-~\ref{thm:43}, we require the following lemmas. These theoretical results resemble those in \cite{Shifted} and their proofs closely follow those of \cite{Shifted}. However, by utilizing the definition of $[\cdot]_{g}$ and its properties, we can improve several bounds in the algorithm. We will discuss these improvements in detail below.

\begin{lemma}
\label{lemma 4.1}
For $E_{A}$ and $E_{B}$ in \eqref{eq:22} and \eqref{eq:23}, if \eqref{eq:29} is satisfied, we have
\begin{align}
\norm{E_{A}}_{2} &\le 1.1mn\uu[X]_{g}^{2}, \label{eq:ea} \\
\norm{E_{B}}_{2} &\le 1.1n(n+1)\uu[X]_{g}^{2}. \label{eq:eb}
\end{align}
\end{lemma}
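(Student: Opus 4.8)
The plan is to bound $\norm{E_A}_2$ and $\norm{E_B}_2$ exactly as in \cite{Shifted}, but replacing every occurrence of $\norm{X}_2$ that arises from bounding an individual column of $X$ by the sharper quantity $[X]_g$. First I would handle $E_A$. By Lemma~\ref{lemma 2.2} applied to the product $X^\top X$, the computed Gram matrix satisfies $\abs{E_A} = \abs{X^\top X - \mathit{fl}(X^\top X)} \le \gamma_m \abs{X^\top}\abs{X}$. The key point is that the $(i,j)$ entry of $\abs{X^\top}\abs{X}$ is $\abs{X_i}^\top\abs{X_j} \le \norm{X_i}_2\norm{X_j}_2 \le [X]_g^2$ by Cauchy--Schwarz and Definition~\ref{def:g}. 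Hence $\abs{X^\top}\abs{X} \le [X]_g^2 J$ entrywise, where $J$ is the $n\times n$ all-ones matrix, so $\norm{X^\top}\abs{X}\norm_2 \le [X]_g^2\norm{J}_2 = n[X]_g^2$. Combining with $\gamma_m \le 1.02m\uu$ (Lemma~\ref{lemma 2.2}) and absorbing the factor $1.02$, one gets $\norm{E_A}_2 \le 1.02mn\uu[X]_g^2 \le 1.1mn\uu[X]_g^2$. (One may alternatively bound $\norm{\abs{X^\top}\abs{X}}_2$ via $\norm{\abs{X}}_F^2$ and the column structure; either route gives the same order, and a modest slack turns the constant into $1.1$.)

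Next I would treat $E_B$. By Lemma~\ref{lemma 2.3} applied to the shifted Gram matrix $B + sI$, the computed Cholesky factor satisfies $\abs{E_B} \le \gamma_{n+1}\abs{R^\top}\abs{R}$. To turn this into a bound in terms of $[X]_g$, one needs control on $\norm{\abs{R}}_F$ (or $\norm{R}_2$), which follows from $R^\top R = B + sI + E_B = X^\top X + E_A + sI + E_B$: taking norms, $\norm{R}_2^2 \le \norm{X}_2^2 + \norm{E_A}_2 + s + \norm{E_B}_2$, and using \eqref{eq:29} together with the bound on $\norm{E_A}_2$ just obtained and $[X]_g \le \norm{X}_2$ (Lemma~\ref{lemma 2.7}) plus the smallness conditions \eqref{eq:mn}--\eqref{eq:nn}, this is at most a modest constant times $\norm{X}_2^2$. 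Plugging $\gamma_{n+1} \le 1.02(n+1)\uu$ and $\norm{\abs{R^\top}\abs{R}}_2 \le \norm{\abs{R}}_F^2 \le n\norm{R}_2^2$ into the entrywise bound yields $\norm{E_B}_2 \le 1.02(n+1)n\uu \cdot C\norm{X}_2^2$; the subtle part is to get the coefficient of $[X]_g^2$ rather than $\norm{X}_2^2$. This is achieved as in \cite{Shifted} by a self-referential (bootstrap) argument: one first obtains a crude bound, substitutes it back into $R^\top R = X^\top X + E_A + sI + E_B$ to see that the "$\norm{X}_2^2$" appearing is really driven by the diagonal of $X^\top X$ plus $s$, both of which are $O([X]_g^2)$ — indeed $(X^\top X)_{jj} = \norm{X_j}_2^2 \le [X]_g^2$ and $s \le \tfrac{1}{100}[X]_g^2$ — and then the column/diagonal structure of $\abs{R^\top}\abs{R}$ lets one replace $\norm{X}_2^2$ by $[X]_g^2$, arriving at $\norm{E_B}_2 \le 1.1n(n+1)\uu[X]_g^2$.

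The main obstacle is exactly this last bootstrap step for $E_B$: naively bounding $\norm{\abs{R^\top}\abs{R}}_2$ by $n\norm{R}_2^2 \le n\,C\norm{X}_2^2$ reintroduces $\norm{X}_2$ and only gives the weaker $\norm{X}_2^2$-type estimate, so the improvement hinges on carefully tracking that $R$ is the Cholesky factor of a matrix whose diagonal entries are $\norm{X_j}_2^2 + s \le (1 + \tfrac{1}{100})[X]_g^2$, controlling $\norm{R}_2$ through $[X]_g$ and the condition number bound \eqref{eq:28} rather than through $\norm{X}_2$ alone, and then verifying that the accumulated constants (from $\gamma_m$, $\gamma_{n+1}$, the $\sqrt{n}$ losses in passing between entrywise, Frobenius and spectral norms, and the slack in \eqref{eq:mn}--\eqref{eq:nn}) still fit under $1.1$. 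Everything else is the routine norm bookkeeping already carried out in \cite{Shifted}, now specialized with Lemmas~\ref{lemma 2.5}--\ref{lemma 2.7}.
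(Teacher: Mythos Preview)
Your treatment of $E_A$ matches the paper's proof exactly: entrywise bound via Lemma~\ref{lemma 2.2}, Cauchy--Schwarz on columns to get $[X]_g^2$, then sum to the Frobenius norm.

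For $E_B$ you have the right ingredients but your route is more roundabout than the paper's, and one suggested step is a red herring. The paper does \emph{not} first bound $\norm{R}_2^2 \le \norm{X}_2^2 + \cdots$ and then try to bootstrap the $\norm{X}_2^2$ away; it goes directly through $[R]_g$. Concretely: $\norm{R}_F^2 = \sum_j \norm{R_j}_2^2 \le n[R]_g^2$, and $[R]_g^2$ is simply the largest diagonal entry of $R^\top R = X^\top X + E_A + sI + E_B$. Since $(X^\top X)_{jj} = \norm{X_j}_2^2 \le [X]_g^2$ and diagonal entries are bounded by the $2$-norm, this gives immediately
\[
[R]_g^2 \le [X]_g^2 + s + \norm{E_A}_2 + \norm{E_B}_2.
\]
Inserting this into $\norm{E_B}_2 \le \gamma_{n+1}\norm{R}_F^2 \le \gamma_{n+1}\,n[R]_g^2$ yields a single linear inequality in $\norm{E_B}_2$, which one solves to obtain \eqref{eq:eb} using only \eqref{eq:mn}, \eqref{eq:nn}, \eqref{eq:29} and the already-proved bound on $\norm{E_A}_2$. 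There is no detour through $\norm{X}_2^2$, and the condition-number assumption \eqref{eq:28} is \emph{not} used anywhere in this lemma. Your phrase ``column/diagonal structure of $\abs{R^\top}\abs{R}$'' is gesturing at the right object, but the clean formulation is just $\norm{R}_F^2 \le n[R]_g^2$ together with reading $[R]_g^2$ off the diagonal of $R^\top R$.
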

\begin{proof}
In this section, we aim to estimate $\norm{E_{A}}_{2}$ and $\norm{E_{B}}_{2}$ using $[X]_{g}$ instead of $\norm{X}_{2}$. While our analysis follows a similar approach to that in \cite{Shifted, error}, our new definition of $[\cdot]_{g}$ allows us to provide improved estimations for the two norms of the error matrices.

We can estimate $\norm{E_{A}}_{F}$ first since $\norm{E_{A}}_{2}$ is bounded by $\norm{E_{A}}_{F}$. With Lemma~\ref{lemma 2.2} and \eqref{eq:22}, we have $B=fl(X^{\top}X)$. Therefore, we have
\begin{equation} \label{eq:ea1}
\begin{split}
\abs{E_{A}} &= \abs{B-X^{\top}X} \\ &\le \gamma_{m}\abs{X^{\top}}\abs{X}. 
\end{split}
\end{equation}
With \eqref{eq:ea1}, for $E_{Aij}$ which denotes the element of $E_{A}$ in the $i$-th row and the $j$-th column, we have
\begin{equation} 
\abs{E_{Aij}} \le \gamma_{m}\abs{X_{i}}\abs{X_{j}}. \label{eq:eqeaij}
\end{equation}
Here, $X_{i}$ denotes the $i$-th column of $X$. We combine \eqref{eq:eqeaij} with \eqref{eq:21} and have
\begin{equation}
\abs{E_{Aij}} \le \gamma_{m}[X]_{g}^{2}. \label{eq:eqeaij1}
\end{equation}
Since $\norm{E_{A}}_{F}=\sqrt{\sum_{i=1}^{n}\sum_{j=1}^{n}(\abs{E_{Aij}})^{2}}$, with \eqref{eq:eqeaij1}, we can bound $\norm{E_{A}}_{2}$ as
\begin{equation}
\begin{split}
\norm{E_{A}}_{2} \le \norm{\abs{E_{A}}}_{F} &\le \gamma_{m}\sqrt{\sum_{i=1}^{n}\sum_{j=1}^{n}(\abs{E_{Aij}})^{2}} \nonumber \\ &\le \gamma_{m}n[X]_{g}^{2} \nonumber \\ &\le 1.1mn\uu[X]_{g}^{2}. \nonumber
\end{split}
\end{equation}
Then, \eqref{eq:ea} is proved. \eqref{eq:ea} is a more accurate estimation of $\norm{E_{A}}_{2}$ compared to that in \cite{Shifted, error} since $[X]_{g} \le \norm{X}_{2}$.

When estimating $\norm{E_{B}}_{F}$, we focus on \eqref{eq:23}. We use the same idea as that in \cite{Shifted, error} for this estimation. With \eqref{eq:21}, we have
\begin{equation}
\norm{R}_{F}^{2}=\norm{\abs{R}}_{F}^{2} \le n[R]_{g}^{2}. \label{eq:rf}
\end{equation}
Using Lemma~\ref{lemma 2.3}, Lemma~\ref{lemma 2.6}, \eqref{eq:22}, \eqref{eq:23} and \eqref{eq:rf}, we can get
\begin{equation} \label{eq:214}
\begin{split}
\norm{E_{B}}_{2} \le \norm{\abs{E_{B}}}_{F} &\le \gamma_{n+1}\norm{R}_{F}^{2} \\ &\le \gamma_{n+1} \cdot n[R]_{g}^{2} \\ &\le \gamma_{n+1} \cdot n([X]_{g}^{2}+s+\norm{E_{A}}_{2}+\norm{E_{B}}_{2}). 
\end{split}
\end{equation}
With \eqref{eq:mn}, \eqref{eq:nn}, \eqref{eq:29}, \eqref{eq:ea} and \eqref{eq:214}, we can bound $\norm{E_{B}}_{2}$ as
\begin{equation}
\begin{split}
\norm{E_{B}}_{2} &\le \frac{\gamma_{n+1}n(1+\gamma_{m}n+t_{1})}{1-\gamma_{n+1}n}[X]_{g}^{2} \nonumber \\
&\le \frac{1.02(n+1){\uu}\cdot n(1+1.02m{\uu}\cdot n+0.01)}{1-1.02(n+1){\uu}\cdot n}[X]_{g}^{2} \nonumber \\
&\le \frac{1.02\cdot n(n+1){\uu}\cdot(1+1.02\cdot\frac{1}{64}+0.01)}{1-\frac{1.02}{64}}[X]_{g}^{2} \nonumber \\
&\le 1.1n(n+1){\uu}[X]_{g}^{2}. \nonumber
\end{split}
\end{equation}
\eqref{eq:eb} is proved. Here, we define $t_{1}=\frac{s}{\norm{X}_{2}^{2}} \le 0.01$ based on \eqref{eq:29}. In all, Lemma~\ref{lemma 4.1} is proved.
\end{proof}
\begin{remark}
The last step of \eqref{eq:214} relies on Lemma~\ref{lemma 2.6} and Lemma~\ref{lemma 2.7}. While the approach for estimating $\norm{E_{B}}_{2}$ parallels that in \cite{Shifted, error}, we utilize the relationships between the $2$-norm and $[\cdot]_{g}$ established in Lemma~\ref{lemma 2.7}, which derive from a distinctly different perspective on rounding error analysis compared to the original work on CholeskyQR-type algorithms.
\end{remark}

\begin{lemma}
\label{lemma 4.2}
For $R^{-1}$ and $XR^{-1}$ from \eqref{eq:24}, when \eqref{eq:29} is satisfied, we have
\begin{align}
\norm{R^{-1}}_{2} &\le \frac{1}{\sqrt{(\sigma_{n}(X))^{2}+0.9s}}, \label{eq:223} \\
\norm{XR^{-1}}_{2} &\le 1.5. \label{eq:224}
\end{align}
\end{lemma}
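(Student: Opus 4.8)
The plan is to bound $\norm{R^{-1}}_{2}$ by controlling $\sigma_{n}(R)$ from below, and then to bound $\norm{XR^{-1}}_{2}$ by combining this with an upper estimate on $\norm{X}_{2}$ relative to $\sigma_{n}(X)$, $s$. First I would observe that $\norm{R^{-1}}_{2} = 1/\sigma_{n}(R)$, and that by \eqref{eq:23} we have $R^{\top}R = X^{\top}X + sI + E_{B}$, so $(\sigma_{n}(R))^{2} = \sigma_{n}(R^{\top}R) = \sigma_{n}(X^{\top}X + sI + E_{B})$. Applying Weyl's theorem (Lemma~\ref{lemma 2.1}) to the splitting $X^{\top}X + sI = (R^{\top}R) + (-E_{B})$ gives $\sigma_{n}(R^{\top}R) \ge \sigma_{n}(X^{\top}X + sI) - \norm{E_{B}}_{2} = (\sigma_{n}(X))^{2} + s - \norm{E_{B}}_{2}$. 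Now I would invoke the bound $\norm{E_{B}}_{2} \le 1.1 n(n+1)\uu [X]_{g}^{2}$ from Lemma~\ref{lemma 4.1}, together with the lower bound $s \ge 11(mn\uu + n(n+1)\uu)[X]_{g}^{2} \ge 11 n(n+1)\uu [X]_{g}^{2}$ from \eqref{eq:29}, to conclude $\norm{E_{B}}_{2} \le \frac{1.1}{11} s = 0.1 s$, hence $(\sigma_{n}(R))^{2} \ge (\sigma_{n}(X))^{2} + s - 0.1 s = (\sigma_{n}(X))^{2} + 0.9 s$. Taking square roots and reciprocals yields \eqref{eq:223}.

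For \eqref{eq:224} I would write $\norm{XR^{-1}}_{2} \le \norm{X}_{2}\norm{R^{-1}}_{2} = \sigma_{1}(X)/\sigma_{n}(R)$ and estimate the numerator and denominator in the "worst" regime. Using \eqref{eq:223}, it suffices to show $(\sigma_{1}(X))^{2} \le 2.25\bigl((\sigma_{n}(X))^{2} + 0.9 s\bigr)$. Here the key point is that $s$ is not too small relative to $(\sigma_{1}(X))^{2}$: when $\kappa_{2}(X)$ is moderate we can absorb $(\sigma_{1}(X))^{2}$ into $(\sigma_{n}(X))^{2}$ via $(\sigma_{1}(X))^{2} = (\kappa_{2}(X))^{2}(\sigma_{n}(X))^{2}$, and when $\kappa_{2}(X)$ is large we instead use that $0.9 s \ge 9.9 n(n+1)\uu[X]_{g}^{2}$ combined with the condition \eqref{eq:28}, i.e. $4.89 n^{2}\uu \cdot p\kappa_{2}(X) \le 1$, and the identity $[X]_{g} = p\norm{X}_{2} = p\sigma_{1}(X)$, to show that $0.9 s$ alone already dominates a large fraction of $(\sigma_{1}(X))^{2}$. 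More precisely, $(\sigma_{1}(X))^{2}/(0.9 s)$ is controlled because $(\sigma_{1}(X))^{2}/s \le (\sigma_{1}(X))^{2}/(11 n(n+1)\uu p^{2}(\sigma_{1}(X))^{2}) = 1/(11 n(n+1)\uu p^{2})$, which is not directly small; so the actual argument must split on the size of $\kappa_{2}(X)$, using $(\sigma_{n}(X))^{2}$ to help when $\kappa_{2}(X)$ is small and noting $\sigma_{n}(X) = \sigma_{1}(X)/\kappa_{2}(X)$ so that $(\sigma_{n}(X))^{2} + 0.9 s \ge (\sigma_{1}(X))^{2}\bigl(\kappa_{2}(X)^{-2} + 8.91 n(n+1)\uu p^{2}\bigr)$, and then optimizing over $\kappa_{2}(X)$ subject to \eqref{eq:28}.

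The main obstacle I anticipate is making the constant $1.5$ come out cleanly in \eqref{eq:224}: the worst case is an intermediate value of $\kappa_{2}(X)$ where neither $(\sigma_{n}(X))^{2}$ nor $0.9 s$ alone suffices, and one must verify that the minimum of $\kappa_{2}(X)^{-2} + 8.91 n(n+1)\uu p^{2}$ over the admissible range forced by \eqref{eq:28} and \eqref{eq:29} is at least $(1/1.5)^{2} = 4/9$. This reduces to checking the inequality at the endpoint $\kappa_{2}(X) = 1/(4.89 n^{2}\uu p)$ (where the first term is smallest), i.e. verifying $(4.89 n^{2}\uu p)^{2} + 8.91 n(n+1)\uu p^{2} \ge 4/9$; but this clearly fails for small $\uu$, which signals that the correct worst case is actually $s$ at its \emph{lower} end and $\kappa_2(X)$ large, so the bound $\norm{XR^{-1}}_2 \le 1.5$ must instead come from a more careful two-regime estimate — likely bounding $\norm{XR^{-1}}_2^2 = \norm{R^{-\top}X^{\top}XR^{-1}}_2$ directly via $X^{\top}X = R^{\top}R - sI - E_B$, giving $\norm{XR^{-1}}_2^2 \le 1 + \norm{R^{-1}}_2^2(s + \norm{E_B}_2) \le 1 + \frac{1.1 s}{(\sigma_n(X))^2 + 0.9 s} \le 1 + \frac{1.1 s}{0.9 s} = 1 + \frac{11}{9}$, whose square root is about $1.49 < 1.5$. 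This last route avoids any dependence on $\kappa_{2}(X)$ altogether and is the cleanest; I would present the proof of \eqref{eq:224} in this form, using \eqref{eq:223} and Lemma~\ref{lemma 4.1} as the only inputs.
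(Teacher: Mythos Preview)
Your overall approach is correct and is essentially the standard argument the paper is deferring to in citing \cite{Shifted}. There is, however, one recurring oversight: combining \eqref{eq:22} and \eqref{eq:23} gives $R^{\top}R = X^{\top}X + sI + E_{A} + E_{B}$, not $R^{\top}R = X^{\top}X + sI + E_{B}$ as you write. You therefore need to subtract $\norm{E_{A}}_{2}+\norm{E_{B}}_{2}$, not just $\norm{E_{B}}_{2}$, when applying Weyl's theorem for \eqref{eq:223}, and likewise include both error terms in the identity $R^{-\top}X^{\top}XR^{-1} = I - R^{-\top}(sI + E_{A} + E_{B})R^{-1}$ for \eqref{eq:224}. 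This changes none of your constants: Lemma~\ref{lemma 4.1} together with the lower bound in \eqref{eq:29} gives $\norm{E_{A}}_{2}+\norm{E_{B}}_{2} \le 1.1(mn\uu+n(n+1)\uu)[X]_{g}^{2} \le 0.1s$, which is exactly the $0.1s$ you already use (and is how the paper itself argues in \eqref{eq:230}), so both conclusions go through unchanged once $E_{A}$ is reinstated.

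Your final route for \eqref{eq:224}, via $\norm{XR^{-1}}_{2}^{2} = \norm{R^{-\top}X^{\top}XR^{-1}}_{2} \le 1 + 1.1s/\bigl((\sigma_{n}(X))^{2}+0.9s\bigr) \le 20/9$, is indeed the intended argument; the earlier attempt through $\norm{X}_{2}\norm{R^{-1}}_{2}$ cannot produce a bound independent of $\kappa_{2}(X)$, as you correctly diagnosed.
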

\begin{proof}
The steps of analysis to get \eqref{eq:223} and \eqref{eq:224} are similar to those in \cite{Shifted}. Lemma~\ref{lemma 4.2} is proved.
\end{proof}

\begin{lemma}
\label{lemma 4.3}
For $E_{Ri}$ from \eqref{eq:24}, when \eqref{eq:29} is satisfied, we have
\begin{equation}
\norm{E_{Ri}}_{2} \le 1.03n\sqrt{n}\uu[X]_{g}. \label{eq:225}
\end{equation}
\end{lemma}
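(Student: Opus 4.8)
The plan is to follow the route used for the corresponding lemma in \cite{Shifted}, substituting the sharper column-wise estimates wherever $\norm{X}_{2}$ previously appeared. First I would note that the relation $q_{i}^{\top}=x_{i}^{\top}(R+E_{Ri})^{-1}$ in \eqref{eq:24} simply records that $q_{i}$ is the vector computed by triangular substitution from the system $q_{i}^{\top}R=x_{i}^{\top}$, equivalently $R^{\top}q_{i}=x_{i}$. Since the shift makes the matrix handed to the Cholesky routine positive definite, the computed $R$ is a nonsingular upper-triangular matrix and the substitution is well defined, so Lemma~\ref{lemma 2.4} (applied to $R^{\top}$ and transposed back) yields the componentwise bound $\abs{E_{Ri}}\le\gamma_{n}\abs{R}$.

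Next I would convert this to a norm bound. Using $\norm{\cdot}_{2}\le\norm{\cdot}_{F}$ and the invariance of the Frobenius norm under taking absolute values of entries,
\[
\norm{E_{Ri}}_{2}\le\norm{\abs{E_{Ri}}}_{F}\le\gamma_{n}\norm{\abs{R}}_{F}=\gamma_{n}\norm{R}_{F},
\]
and then \eqref{eq:rf} gives $\norm{R}_{F}\le\sqrt{n}\,[R]_{g}$. The step where the improvement enters is the estimate of $[R]_{g}$: combining \eqref{eq:22} and \eqref{eq:23} gives $R^{\top}R=X^{\top}X+sI+E_{A}+E_{B}$, and for each $j$ the $j$-th diagonal entry of this matrix equals $\norm{R_{j}}_{2}^{2}$ (with $R_{j}$ the $j$-th column of $R$), which equals $\norm{X_{j}}_{2}^{2}+s+(E_{A})_{jj}+(E_{B})_{jj}$. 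Maximizing over $j$ and using Definition~\ref{def:g} together with $\abs{(E_{A})_{jj}}\le\norm{E_{A}}_{2}$ and $\abs{(E_{B})_{jj}}\le\norm{E_{B}}_{2}$ gives $[R]_{g}^{2}\le[X]_{g}^{2}+s+\norm{E_{A}}_{2}+\norm{E_{B}}_{2}$, which is exactly the inequality already used inside \eqref{eq:214}. Feeding in Lemma~\ref{lemma 4.1}, the shift bound \eqref{eq:29}, and the size assumptions \eqref{eq:mn} and \eqref{eq:nn} collapses the right-hand side to a constant only slightly above $1$ times $[X]_{g}^{2}$, i.e.\ $[R]_{g}\le c\,[X]_{g}$ with $c$ close to $1$.

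Finally I would assemble the pieces: $\norm{E_{Ri}}_{2}\le\gamma_{n}\sqrt{n}\,[R]_{g}\le c\,\gamma_{n}\sqrt{n}\,[X]_{g}$, and then bound $\gamma_{n}=\frac{n\uu}{1-n\uu}$ using \eqref{eq:nn} (which forces $n\uu$ to be far below $1/64$, so $\gamma_{n}$ is very close to $n\uu$) to arrive at $\norm{E_{Ri}}_{2}\le1.03\,n\sqrt{n}\,\uu[X]_{g}$, which is \eqref{eq:225}. The one genuinely delicate point is this last round of constant bookkeeping: one must combine the constant $c$ (which depends on $s/[X]_{g}^{2}\le1/100$ and on the $1/64$ bounds on $mn\uu$ and $n(n+1)\uu$) with the $\frac{1}{1-n\uu}$ factor from $\gamma_{n}$ so that the product still comes out below $1.03$. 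Everything else is routine, and there is room to spare because \eqref{eq:nn} makes $n\uu$ much smaller than the crude bound $1/64$ would suggest. This is the main obstacle; the remainder of the argument is a direct transcription of the $\norm{X}_{2}$-based proof in \cite{Shifted} with $[X]_{g}$ in place of $\norm{X}_{2}$.
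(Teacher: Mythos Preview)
Your proposal is correct and follows essentially the same route as the paper: apply Lemma~\ref{lemma 2.4} to get $\norm{E_{Ri}}_{2}\le\gamma_{n}\sqrt{n}\,[R]_{g}$, bound $[R]_{g}^{2}\le[X]_{g}^{2}+s+\norm{E_{A}}_{2}+\norm{E_{B}}_{2}\le1.011[X]_{g}^{2}$ via the diagonal-entry argument, and then combine $[R]_{g}\le1.006[X]_{g}$ with $\gamma_{n}\le1.02n\uu$ to reach the constant $1.03$. Your explanation of the diagonal-entry step and of the constant bookkeeping is in fact more explicit than the paper's.
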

\begin{proof}
The steps to get \eqref{eq:225} are similar to those in \cite{Shifted}. However, the property of $[\cdot]_{g}$ can provide a tighter bound of $\norm{E_{Ri}}_{2}$. For $1\le i\le m$, based on Lemma~\ref{lemma 2.4}, we have
\begin{equation}
\norm{E_{Ri}}_{2} \le \norm{E_{Ri}}_{F} \le \gamma_n \sqrt{n}[R]_{g}. \label{eq:eri}
\end{equation}
Based on the properties of Cholesky factorization and the structure of the algorithm, we find that the square of $[\cdot]_{g}$ of the matrix corresponds to the largest entry on the diagonal of the Gram matrix. With Lemma~\ref{lemma 2.7}, \eqref{eq:22}, \eqref{eq:23} and \eqref{eq:29}, we obtain
\begin{equation}
[R]_{g}^{2} \le [X]_{g}^{2}+s+(\norm{E_{A}}_{2}+\norm{E_{B}}_{2}) \le 1.011[X]_{g}^{2}. \label{eq:rg2}
\end{equation}
With \eqref{eq:rg2}, it is easy to see that
\begin{equation}
[R]_{g} \le 1.006[X]_{g}. \label{eq:rg}
\end{equation}
Therefore, we put \eqref{eq:rg} into \eqref{eq:eri} and we can get\eqref{eq:225}. Lemma~\ref{lemma 4.3} is proved.
\end{proof}

\begin{lemma}
\label{lemma 4.4}
For $E_{X}$ from \eqref{eq:25}, when \eqref{eq:29} is satisfied, we have
\begin{equation}
\norm{E_{X}}_{2} \le \frac{1.15n^{2}\uu[X]_{g}^{2}}{\sqrt{(\sigma_{n}(X))^{2}+0.9s}}. \label{eq:226}
\end{equation}
\end{lemma}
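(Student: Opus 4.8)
The plan is to recycle the residual analysis of \cite{Shifted}, but to feed into it the column-based bounds of Lemma~\ref{lemma 4.1}--Lemma~\ref{lemma 4.3} (and the elementary estimate $\norm{X}_F\le\sqrt{n}\,[X]_g$) wherever the column structure of $X$ enters. First I would connect $E_X$ with the row-wise errors $E_{Ri}$: right-multiplying \eqref{eq:24} by $R+E_{Ri}$ gives $q_i^{\top}R+q_i^{\top}E_{Ri}=x_i^{\top}$ for every row index $i$, and subtracting this from the $i$-th row of \eqref{eq:25} shows that the $i$-th row of $E_X$ equals $x_i^{\top}-q_i^{\top}R=q_i^{\top}E_{Ri}$. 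Hence $\norm{E_X}_2\le\norm{E_X}_F$ and, collecting the rows,
\begin{equation}
\norm{E_X}_F^2=\sum_{i=1}^{m}\norm{q_i^{\top}E_{Ri}}_2^2\le\Big(\max_{1\le i\le m}\norm{E_{Ri}}_2\Big)^2\norm{Q}_F^2. \nonumber
\end{equation}
Lemma~\ref{lemma 4.3} already controls $\max_i\norm{E_{Ri}}_2$, so the only remaining task is to bound $\norm{Q}_F$.

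For $\norm{Q}_F$ I would use $q_i^{\top}=x_i^{\top}(R+E_{Ri})^{-1}$, so $\norm{q_i}_2\le\norm{x_i}_2\,\norm{(R+E_{Ri})^{-1}}_2$. The key estimate here is a perturbation bound on $(R+E_{Ri})^{-1}$: combining \eqref{eq:223}, \eqref{eq:225}, the lower bound $s\ge 11n(n+1)\uu[X]_g^2$ from \eqref{eq:29} and $n^2\uu\le\frac{1}{64}$ from \eqref{eq:nn}, one checks that $\norm{R^{-1}}_2\norm{E_{Ri}}_2$ is uniformly small (well below $1$, in fact of order $0.05$), whence a Neumann-series argument yields $\norm{(R+E_{Ri})^{-1}}_2\le\norm{R^{-1}}_2/(1-\norm{R^{-1}}_2\norm{E_{Ri}}_2)$, which by \eqref{eq:223} is at most a constant $c$ slightly larger than $1$ times $((\sigma_n(X))^2+0.9s)^{-1/2}$. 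Summing $\norm{q_i}_2^2$ over the rows and using $\sum_{i=1}^{m}\norm{x_i}_2^2=\norm{X}_F^2=\sum_{j=1}^{n}\norm{X_j}_2^2\le n[X]_g^2$ from Definition~\ref{def:g}, I obtain $\norm{Q}_F\le c\sqrt{n}\,[X]_g/\sqrt{(\sigma_n(X))^2+0.9s}$.

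Finally I would substitute Lemma~\ref{lemma 4.3}, $\max_i\norm{E_{Ri}}_2\le 1.03n\sqrt{n}\uu[X]_g$, together with this $\norm{Q}_F$ estimate into the displayed inequality, to get
\begin{equation}
\norm{E_X}_2\le 1.03n\sqrt{n}\uu[X]_g\cdot\frac{c\sqrt{n}\,[X]_g}{\sqrt{(\sigma_n(X))^2+0.9s}}=\frac{1.03\,c\,n^2\uu[X]_g^2}{\sqrt{(\sigma_n(X))^2+0.9s}}, \nonumber
\end{equation}
and since the numerical constants above force $1.03c\le 1.15$, this is exactly \eqref{eq:226}. The only genuinely delicate step is the perturbation/Neumann bound for $(R+E_{Ri})^{-1}$ and the careful tracking of constants so that the final prefactor stays under $1.15$; everything else is a routine chaining of Lemmas~\ref{lemma 4.2}--\ref{lemma 4.3} with the column-norm inequality $\norm{X}_F\le\sqrt{n}\,[X]_g$, which is precisely where the improvement over the $\norm{X}_2$-based estimate in \cite{Shifted} is gained.
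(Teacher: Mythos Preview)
Your proposal is correct and is essentially the paper's argument, repackaged. The paper also works row by row from \eqref{eq:24}--\eqref{eq:25}, writing the $i$-th row of $E_X$ as $x_i^{\top}\theta_i$ with $\theta_i=(I+R^{-1}E_{Ri})^{-1}-I$, bounds $\norm{\theta_i}_2$ by the same Neumann-series estimate (using $\norm{R^{-1}}_2\norm{E_{Ri}}_2\le 0.1$ from \eqref{eq:223}, \eqref{eq:225}, \eqref{eq:29}), and finishes via $\norm{X}_F\le\sqrt{n}\,[X]_g$; your factorization $q_i^{\top}E_{Ri}$ and detour through $\norm{Q}_F$ are algebraically the same decomposition (indeed $q_i^{\top}E_{Ri}=-x_i^{\top}\theta_i$) and produce the identical constant $1.03/0.9\le 1.15$.
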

\begin{proof}
For Shifted CholeskyQR, $R$ will not always be invertible due to errors in numerical computations. Therefore, we estimate this by examining each row. Similar to the approach in \cite{Shifted}, we can express \eqref{eq:24} as
\begin{equation}
q_{i}^{\top}=x_{i}^{\top}(R+E_{Ri})^{-1}=x_{i}^{\top}(I+R^{-1}E_{Ri})^{-1}R^{-1}. \label{eq:217}
\end{equation}
When we define
\begin{equation}
(I+R^{-1}E_{i})^{-1}=I+ \theta_{i}, \label{eq:218}
\end{equation}
where
\begin{equation}
\theta_{i}:=\sum_{j=1}^{\infty}(-R^{-1}E_{Ri})^{j}, \label{eq:219}
\end{equation}
based on \eqref{eq:24} and \eqref{eq:25}, we can have
\begin{equation}
E_{Xi}^{\top}=x_{i}^{\top}\theta_{i}, \label{eq:220}
\end{equation}
which is the $i$-th row of $E_{X}$. Based on \eqref{eq:nn}, \eqref{eq:29}, \eqref{eq:223} and \eqref{eq:225}, we can bound $\norm{R^{-1}E_{Ri}}_{2}$ as
\begin{equation} \label{eq:r-1eri}
\begin{split}
\norm{R^{-1}E_{Ri}}_{2} &\le \norm{R^{-1}}_{2}\norm{E_{Ri}}_{2} \\ &\le \frac{1.03n\sqrt{n}\uu[X]_{g}}{\sqrt{(\sigma_{n}(X))^{2}+0.9s}} \\ &\le \frac{1.03n\sqrt{n}\uu[X]_{g}}{\sqrt{0.9s}} \\ &\le \frac{1.03n\sqrt{n}\uu[X]_{g}}{\sqrt{9.9(mn\uu+n(n+1)\uu)[X]_{g}^{2}}} \\ &\le \frac{1.03n\sqrt{n}\uu[X]_{g}}{\sqrt{9.9n(n+1)\uu[X]_{g}^{2}}} \\ &\le 0.35 \cdot \sqrt{n\uu} \\ &\le 0.1. 
\end{split}
\end{equation}
Putting \eqref{eq:223}, \eqref{eq:225}, \eqref{eq:r-1eri} into \eqref{eq:219} and we have
\begin{equation} \label{eq:theta}
\begin{split}
\norm{\theta_{i}}_{2} &\le \sum_{j=1}^{\infty}(\norm{R^{-1}}_{2}\norm{E_{Ri}}_{2})^{j} \\ &= \frac{\norm{R^{-1}}_{2}\norm{E_{Ri}}_{2}}{1-\norm{R^{-1}}_{2}\norm{E_{Ri}}_{2}} \\ &\le \frac{1}{0.9} \cdot \frac{1.03n\sqrt{n}\uu[X]_{g}}{\sqrt{(\sigma_{n}(X))^{2}+0.9s}} \\ &\le \frac{1.15n\sqrt{n}\uu[X]_{g}}{\sqrt{(\sigma_{n}(X))^{2}+0.9s}}. 
\end{split}
\end{equation}
Summing all the items of \eqref{eq:220} together and with \eqref{eq:theta}, we have
\begin{equation}
\norm{E_{X}}_{2} \le \norm{E_{X}}_{F} \le \norm{X}_{F}\norm{\theta_{i}}_{2} \le \frac{1.15n^{2}\uu[X]_{g}^{2}}{\sqrt{(\sigma_{n}(X))^{2}+0.9s}}, \nonumber
\end{equation}
when $\norm{X}_{F} \le \sqrt{n}[X]_{g}$. Therefore, Lemma~\ref{lemma 4.4} is proved.
\end{proof}
\begin{remark}
The derivations of Lemmas~\ref{lemma 4.1}-\ref{lemma 4.4} utilize the properties of $[\cdot]_{g}$ and we can get sharper upper bounds compared to those in \cite{Shifted}. This shows that Shifted CholeskyQR can be analyzed from the column of the input matrix $X$. The calculation of the Gram matrix and the existence of Cholesky factorization make it possible for us to improve the algorithm from this perspective.
\end{remark}

\subsection{Proof of Theorem~\ref{thm:41}}
\label{sec:45}
\begin{proof}
Using the previous lemmas in Section~\ref{sec:44}, we begin to estimate the orthogonality and residual of our improved Shifted CholeskyQR. The proof of Theorem~\ref{thm:41} is similar to that in \cite{Shifted}. We aim to demonstrate that comparable results hold, even with our enhanced bounds in the previous lemmas, based on the properties of $[\cdot]_{g}$ discussed in Section~\ref{sec:44}.

First, we consider the orthogonality. Based on \eqref{eq:25}, we can get
\begin{equation} \label{eq:QQ}
\begin{split}
Q^{\top}Q &= R^{-\top}(X+E_{X})^{\top}(X+E_{X})R^{-1} \\
&= R^{-\top}X^{\top}XR^{-1}+R^{-\top}X^{\top}E_{X}R^{-1} \\
&+ R^{-\top}E_{X}^{\top}XR^{-1}+R^{-\top}E_{X}^{\top}E_{X}R^{-1} \\
&= I-R^{-\top}(sI+E_{A}+E_{B})R^{-1}+(XR^{-1})^{\top}E_{X}R^{-1} \\
&+ R^{-\top}E_{X}^{\top}(XR^{-1})+R^{-\top}E_{X}^{\top}E_{X}R^{-1}. 
\end{split}
\end{equation}
With \eqref{eq:QQ}, we have
\begin{equation} \label{eq:229}
\begin{split}
\norm{Q^{\top}Q-I}_{2} &\le \norm{R^{-1}}_{2}^{2}(\norm{E_{A}}_{2}+\norm{E_{B}}_{2}+s)+2\norm{R^{-1}}_{2}\norm{XR^{-1}}_{2}\norm{E_{X}}_{2} \\ &+ \norm{R^{-1}}_{2}^{2}\norm{E_{X}}_{2}^{2}. 
\end{split}
\end{equation}
According to \eqref{eq:29}-\eqref{eq:eb}, we can get $\norm{E_{A}}_{2}+\norm{E_{B}}_{2} \le 1.1(mn\uu+n(n+1)\uu)[X]_{g}^{2} \le 0.1s$. With \eqref{eq:223}, we can get
\begin{equation} \label{eq:230}
\begin{split}
\norm{R^{-1}}_{2}^{2}(\norm{E_{A}}_{2}+\norm{E_{B}}_{2}+s) &\le \frac{1.1s}{(\sigma_{n}(X))^{2}+0.9s} \\ &\le \frac{11}{9} \\ &\le 1.23. 
\end{split}
\end{equation}
Based on \eqref{eq:29}, \eqref{eq:223}, \eqref{eq:224} and \eqref{eq:226}, we can obtain
\begin{equation} \label{eq:231}
\begin{split}
2\norm{R^{-1}}_{2}\norm{XR^{-1}}_{2}\norm{E_{X}}_{2} &\le 2 \cdot \frac{1}{\sqrt{(\sigma_{n}(X))^{2}+0.9s}} \cdot 1.5 \cdot \frac{1.15n^{2}\uu[X]_{g}^{2}}{\sqrt{(\sigma_{n}(X))^{2}+0.9s}} \\ &\le \frac{3.45n^{2}\uu[X]_{g}^{2}}{(\sigma_{n}(X))^{2}+0.9s} \\ &\le \frac{\frac{3.45}{11} \cdot s}{0.9s} \\ &\le 0.35. 
\end{split}
\end{equation}
With \eqref{eq:223} and \eqref{eq:226}, we have
\begin{equation} \label{eq:232}
\begin{split}
\norm{R^{-1}}_{2}^{2}\norm{E_{X}}_{2}^{2} &\le \frac{1}{(\sigma_{n}(X))^{2}+0.9s} \cdot \frac{(1.15n^{2}\uu[X]_{g}^{2})^{2}}{(\sigma_{n}(X))^{2}+0.9s} \\ &\le \frac{(\frac{3.45}{11} \cdot s)^{2}}{(0.9s)^{2}} \\ &\le 0.02. 
\end{split}
\end{equation}
We put \eqref{eq:230}-\eqref{eq:232} into \eqref{eq:229} and we can get
\begin{equation}
\begin{split}
\norm{Q^{\top}Q-I}_{2} &\le 1.23+0.35+0.02 \nonumber \\ &\le 1.6. \nonumber
\end{split}
\end{equation}
Therefore, \eqref{eq:227} is proved.

From \eqref{eq:227}, it is easy to see that
\begin{equation}
\norm{Q}_{2} \le 1.62. \label{eq:233}
\end{equation}
For the residual, from \eqref{eq:233}, we can easily get
\begin{equation}
\norm{Q}_{F} \le 1.62\sqrt{n}. \label{eq:234}
\end{equation}
For $\norm{QR-X}_{F}$, based on \eqref{eq:225} and \eqref{eq:234}, similar to the corresponding steps in \cite{Shifted}, we will have \eqref{eq:228}. In all, Theorem~\ref{thm:41} is proved
\end{proof}
\begin{remark}
In the proof of Theorem~\ref{thm:41}, we demonstrate that our improved $s$ is sufficient to ensure numerical stability for Shifted CholeskyQR, with enhanced bounds established in the previous lemmas. This represents significant progress compared to that in \cite{Shifted}. The residual in \eqref{eq:228} shows a tighter upper bound compared to that in \cite{Shifted}. More importantly, \eqref{eq:228} can improve the condition for $\kappa_{2}(X)$ in the estimation of the singular values of $Q$ in the next section.
\end{remark}

\subsection{Proof of Theorem~\ref{thm:42}}
\label{sec:46}
In this section, we give the proof for Theorem~\ref{thm:42}.

\begin{proof}
We have already estimated $\norm{Q}_{2}$. To estimate $\kappa_{2}(X)$, we need to estimate $\sigma_{n}(Q)$. The primary steps of analysis are similar to that in \cite{Shifted}. When \eqref{eq:25} holds, according to Lemma~\ref{lemma 2.1}, we can get
\begin{equation}
\sigma_{n}(Q) \ge \sigma_{n}(XR^{-1})-\norm{E_{X}R^{-1}}_{2}. \label{eq:237}
\end{equation}
With \eqref{eq:223} and \eqref{eq:226}, we can obtain
\begin{equation}
\norm{E_{X}R^{-1}}_{2} \le \norm{E_{X}}_{2}\norm{R^{-1}}_{2} \le \frac{1.67n^{2}\uu[X]_{g}}{(\sigma_{n}(X))^{2}+0.9s}. \label{eq:238}
\end{equation}
Using the similar method in \cite{Shifted}, we have
\begin{equation}
\sigma_{n}(XR^{-1}) \ge \frac{\sigma_{n}(X)}{\sqrt{(\sigma_{n}(X))^{2}+s}} \cdot 0.9. \label{eq:239}
\end{equation}
When \eqref{eq:28} holds, we put \eqref{eq:238} and \eqref{eq:239} into \eqref{eq:237} and with $t=\frac{s}{\norm{X}_{2}^{2}}$, we can get
\begin{equation} \label{eq:240}
\begin{split}
\sigma_{n}(Q) &\ge \frac{0.9\sigma_{n}(X)}{\sqrt{(\sigma_{n}(X))^{2}+s}}-\frac{1.67n^{2}\uu[X]_{g}}{\sqrt{(\sigma_{n}(X))^{2}+0.9s}} \\ &\ge \frac{0.9}{\sqrt{(\sigma_{n}(X))^{2}+s}} \cdot (\sigma_{n}(X)-\frac{1.67}{0.9 \cdot \sqrt{0.9}} \cdot n^{2}\uu[X]_{g}) \\ &\ge \frac{\sigma_{n}(X)}{2\sqrt{(\sigma_{n}(X))^{2}+s}} \\ &= \frac{1}{2\sqrt{1+t(\kappa_{2}(X))^{2}}}.
\end{split}
\end{equation}
Based on \eqref{eq:233} and \eqref{eq:240}, we have
\begin{equation}
\kappa_{2}(Q) \le 3.24 \cdot \sqrt{1+t(\kappa_{2}(X))^{2}}. \nonumber
\end{equation}
Therefore, we can get \eqref{eq:235}.

To improve the stability of orthogonality and residual, we add a CholeskyQR2 following the Shifted CholeskyQR, resulting in the Shifted CholeskyQR3. The numerical stability of this approach will be demonstrated in the next section similar to that in \cite{Shifted}. To obtain the sufficient condition of $\kappa_{2}(X)$ without encountering the numerical breakdown, based on \eqref{eq:delta} in \cite{error}, we let
\begin{equation}
\kappa_{2}(Q) \le 3.24\sqrt{1+t(\kappa_{2}(X))^{2}} \le \frac{1}{8\sqrt{mn\uu+n(n+1)\uu}}. \label{eq:241}
\end{equation}
When \eqref{eq:29} is satisfied, along with \eqref{eq:p} and $t=\frac{s}{\norm{X}_{2}^{2}}$, we can have $11p^{2}(mn\uu+n(n+1)\uu) \le t \le \frac{1}{100}p^{2}$. When $s=11(mn\uu+n(n+1)\uu)[X]_{g}^{2}$, $t=11p^{2}(mn\uu+n(n+1)\uu)$. If $\kappa_{2}(X)$ is large enough, \textit{e.g.}, $\kappa_{2}(X) \ge \uu^{-\frac{1}{2}}$, we can have $t(\kappa_{2}(X))^{2} \ge 11(m+n)>>1$. Therefore, $1+t(\kappa_{2}(X))^{2} \approx t(\kappa_{2}(X))^{2}$. With \eqref{eq:241}, we can conclude that
\begin{equation}
\kappa_{2}(X) \le \frac{1}{25.92\sqrt{t} \cdot \sqrt{mn\uu+n(n+1)\uu}}. \label{eq:2X}
\end{equation}
We put $t=11p^{2}(mn\uu+n(n+1)\uu)$ into \eqref{eq:2X} and we can obtain \eqref{eq:236}. Therefore, Theorem~\ref{thm:42} is proved.
\end{proof}
\begin{remark}
We have shown that our improved Shifted CholeskyQR, with a smaller $s$, has advantages in terms of the requirement for $\kappa_{2}(X)$ and its sufficient condition compared to the original method. A comprehensive comparison of the theoretical results is provided in Section~\ref{sec:introduction}, highlighting these advantages, which are further illustrated in Section~\ref{sec:experiments}.
\end{remark}

\subsection{Proof of Theorem~\ref{thm:43}}
\label{sec:47}
In this section, we prove Theorem~\ref{thm:43} with some results in Theorem~\ref{thm:41}.

\begin{proof}
We write CholeskyQR2 in Shifted CholeskyQR3 with error matrices below.
\begin{align}
C-Q^{\top}Q &= E_{1}, \nonumber \\
R_{1}^{\top}R_{1}-C &= E_{2}, \nonumber \\
Q_{1}R_{1}-Q &= E_{3}, \label{eq:246} \\
R_{1}R-R_{2} &= E_{4}, \label{eq:247} \\
C_{1}-Q_{1}^{\top}Q_{1} &= E_{5}, \nonumber \\
R_{3}^{\top}R_{3}-C_{1} &= E_{6}, \nonumber \\
Q_{2}R_{3}-Q_{1} &= E_{7}, \label{eq:250} \\
R_{3}R_{2}-R{4} &= E_{8}. \label{eq:251}
\end{align}

Similar to the proof of Theorem~\ref{thm:41}, we consider the orthogonality first. For our improved Shifted CholeskyQR3, similar to that in \cite{error}, when Shifted CholeskyQR3 is applicable, we can get
\begin{align}
\kappa_{2}(Q) &\le \frac{1}{8\sqrt{mn\uu+n(n+1)\uu}}, \label{eq:252} \\
\kappa_{2}(Q_{1}) &\le 1.1. \label{eq:253}
\end{align}
Therefore, we can obtain \eqref{eq:242}.

When considering the residual, based on \eqref{eq:246}-\eqref{eq:251}, we have
\begin{equation} \label{eq:Q2R4}
\begin{split}
Q_{2}R_{4} &= (Q_{1}+E_{7})R_{3}^{-1}(R_{3}R_{2}-E_{8}) \\ &= (Q_{1}+E_{7})R_{2}-(Q_{1}+E_{7})R_{3}^{-1}E_{8} \\ &= Q_{1}R_{2}+E_{7}R_{2}-Q_{2}E_{8} \\ &= (Q+E_{3})R_{1}^{-1}(R_{1}R-E_{4})+E_{7}R_{2}-Q_{2}E_{8} \\ &= (Q+E_{3})R-(Q+E_{3})R_{1}^{-1}E_{4}+E_{7}R_{2}-Q_{2}E_{8} \\ &= QR+E_{3}R-Q_{1}E_{4}+E_{7}R_{2}-Q_{2}E_{8}. 
\end{split}
\end{equation}
Therefore, with \eqref{eq:Q2R4}, it is obvious that
\begin{equation} \label{eq:254}
\begin{split}
\norm{Q_{2}R_{4}-X}_{F} &\le \norm{QR-X}_{F}+\norm{E_{3}}_{F}\norm{R}_{2}+\norm{Q_{1}}_{2}\norm{E_{4}}_{F} \\ &+ \norm{E_{7}}_{F}\norm{R_{2}}_{2}+\norm{Q_{2}}_{2}\norm{E_{8}}_{F}. 
\end{split}
\end{equation}
Similar to \eqref{eq:24}, we express \eqref{eq:246} in each row as $q_{1i}^{\top}=q_{i}^{\top}(R_{1}+E_{R1i})^{-1}$, where $q_{1i}^{\top}$ and $q_{i}^{\top}$ denote the $i$-th rows of $Q_{1}$ and $Q$. Following the methodologies outlined in \cite{Shifted, error} and the concepts presented in our work, we have
\begin{equation}
\norm{R}_{2} \le 1.006\norm{X}_{2}, \label{eq:255} 
\end{equation}
\begin{equation} \label{eq:257} 
\begin{split}
\norm{E_{R1i}}_{2} &\le 1.2n\sqrt{n}\uu \cdot \norm{Q}_{2} \\ 
&\le 2.079n\sqrt{n}\uu, 
\end{split}
\end{equation}
\begin{equation}
\norm{Q_{1}}_{2} \le 1.039, \label{eq:258} 
\end{equation}
\begin{equation} \label{eq:259}
\begin{split}
\norm{R_{1}}_{2} &\le 1.1\norm{Q}_{2} \\ &\le 1.906. 
\end{split}
\end{equation}
We combine \eqref{eq:255}-\eqref{eq:259} with Lemma~\ref{lemma 2.2}, Lemma~\ref{lemma 2.5}, \eqref{eq:rg} and similar steps in \cite{Shifted}, we can bound $\norm{E_{3}}_{F}$, $\norm{E_{4}}_{F}$ and $[E_{4}]_{g}$ in \eqref{eq:246} and \eqref{eq:247} as
\begin{equation} \label{eq:260} 
\begin{split}
\norm{E_{3}}_{F} &\le \norm{Q_{1}}_{F} \cdot \norm{E_{R1i}}_{2} \\ &\le 1.039 \cdot \sqrt{n} \cdot 2.079n\sqrt{n}\uu \\ &\le 2.16n^{2}\uu, 
\end{split}
\end{equation}
\begin{equation} \label{eq:261} 
\begin{split}
\norm{E_{4}}_{F} &\le \gamma_{n}(\norm{R_{1}}_{F} \cdot \norm{R}_{F}) \\ &\le \gamma_{n}(\sqrt{n} \cdot \norm{R_{1}}_{2} \cdot \sqrt{n} \cdot [R]_{g}) \\ &\le 1.1n^{2}\uu \cdot 1.906 \cdot 1.006p\norm{X}_{2} \\ &\le 2.11pn^{2}\uu\norm{X}_{2}, 
\end{split}
\end{equation}
\begin{equation} \label{eq:262}
\begin{split}
[E_{4}]_{g} &\le \gamma_{n}(\norm{R_{1}}_{F} \cdot [R]_{g}) \\ &\le \gamma_{n}(\sqrt{n}\norm{R_{1}}_{2} \cdot [R]_{g}) \\ &\le 1.1n\sqrt{n}\uu \cdot 1.906 \cdot 1.006p\norm{X}_{2} \\ &\le 2.11pn\sqrt{n}\uu\norm{X}_{2}. 
\end{split}
\end{equation}
Moreover, based on Lemma~\ref{lemma 2.5}, Lemma~\ref{lemma 2.6}, \eqref{eq:rg}, \eqref{eq:255}, \eqref{eq:261} and \eqref{eq:262}, $\norm{R_{2}}_{2}$ and $[R_{2}]_{g}$ in \eqref{eq:247} can be bounded as
\begin{equation} \label{eq:263} 
\begin{split}
\norm{R_{2}}_{2} &\le \norm{R_{1}}_{2}\norm{R}_{2}+\norm{E_{4}}_{2} \\ &\le 1.906 \cdot 1.006\norm{X}_{2}+2.11pn^{2}\uu\norm{X}_{2} \\ &\le 1.95\norm{X}_{2}, 
\end{split}
\end{equation}
\begin{equation} \label{eq:264}
\begin{split}
[R_{2}]_{g} &\le \norm{R_{1}}_{2}[R]_{g}+[E_{4}]_{g} \\ &\le 1.906 \cdot 1.006p\norm{X}_{2}+2.11pn\sqrt{n}\uu\norm{X}_{2} \\ &\le 1.95p\norm{X}_{2}. 
\end{split}
\end{equation}
Similar to \eqref{eq:24}, we write \eqref{eq:250} in each row as $q_{2i}^{\top}=q_{1i}^{\top}(R_{3}+E_{R3i})^{-1}$, where $q_{2i}^{\top}$ and $q_{1i}^{\top}$ represent the $i$-th rows of $Q_{2}$ and $Q_{1}$. Similar to \eqref{eq:257}-\eqref{eq:259}, with \eqref{eq:mn}, \eqref{eq:nn} and \eqref{eq:242}, we can get
\begin{equation}
\norm{Q_{2}}_{2} \le 1.1, \label{eq:265} 
\end{equation}
\begin{equation} \label{eq:266} 
\begin{split}
\norm{E_{R3i}}_{2} &\le 1.2n\sqrt{n}\norm{Q_{1}}_{2} \\ &\le 1.2n\sqrt{n}\uu \cdot 1.039 \\ &\le 1.246n\sqrt{n}\uu, 
\end{split}
\end{equation}
\begin{equation} \label{eq:267}
\begin{split}
\norm{R_{3}}_{2} &\le 1.1\norm{Q_{1}}_{2} \\ &\le 1.143, 
\end{split}
\end{equation}
With Lemma~\ref{lemma 2.2} and \eqref{eq:264}-\eqref{eq:267}, we can bound $\norm{E_{7}}_{F}$ and $\norm{E_{8}}_{F}$ in \eqref{eq:250} and \eqref{eq:251} as
\begin{equation} \label{eq:268} 
\begin{split}
\norm{E_{7}}_{F} &\le \norm{Q_{2}}_{F} \cdot \norm{E_{R3i}}_{2} \\ &\le 1.1\sqrt{n} \cdot 1.246n\sqrt{n}\uu \\ &\le 1.38n^{2}\uu, 
\end{split}
\end{equation}
\begin{equation} \label{eq:269}
\begin{split}
\norm{E_{8}}_{F} &\le \gamma_{n}(\norm{R_{3}}_{F} \cdot \norm{R_{2}}_{F}) \\ &\le \gamma_{n}(\sqrt{n} \cdot \norm{R_{3}}_{2} \cdot \sqrt{n} \cdot [R_{2}]_{g}) \\ &\le 1.1pn^{2}\uu \cdot 1.143 \cdot 1.95p\norm{X}_{2} \\ &\le 2.46pn^{2}\uu\norm{X}_{2}. 
\end{split}
\end{equation}
Therefore, we put \eqref{eq:228}, \eqref{eq:255}, \eqref{eq:258}, \eqref{eq:260}, \eqref{eq:261}, \eqref{eq:263}, \eqref{eq:265}, \eqref{eq:268} and \eqref{eq:269} into \eqref{eq:254} and we can get \eqref{eq:243}. In all, Theorem~\ref{thm:43} is proved.
\end{proof}
\begin{remark}
Based on \eqref{eq:243}, we find that we obtain a sharper upper bound of the residual of the algorithm compared to that in \cite{Shifted}, utilizing the properties of $[\cdot]_{g}$. This represents a theoretical advancement in rounding error analysis. The steps leading to \eqref{eq:264} highlight the effectiveness of Lemma~\ref{lemma 2.5} and Lemma~\ref{lemma 2.7}. Although the second inequality of \eqref{eq:A3} appears weaker than the first inequality of \eqref{eq:A3}, it cannot be dismissed in estimating $[\cdot]_{g}$ of the error matrix in terms of its absolute value. This lays a solid foundation for \eqref{eq:264} and \eqref{eq:269}, marking advancements in estimation methods for problems related to matrix multiplications.
\end{remark}

Moreover, if $X$ is not highly ill-conditioned, meaning that $\kappa_{2}(X)$ is small, our estimation of $\norm{E_{A}}_{2}$ and $\norm{E_{B}}_{2}$ can also be directly applied to CholeskyQR2. Therefore, the sufficient condition for $\kappa_{2}(X)$ can be expressed as
\begin{equation}
\kappa_{2}(X) \le \frac{1}{8p\sqrt{mn\uu+n(n+1)\uu}}. \nonumber
\end{equation}
This condition is a better sufficient condition compared to \eqref{eq:delta} in \cite{error}.

\section{Experimental Results}
\label{sec:experiments}
In this study, we conduct numerical experiments using MATLAB R2022a on a laptop. We compare our improved Shifted CholeskyQR3 with the original Shifted CholeskyQR3, focusing on three key properties: numerical stability(assessed through orthogonality $\norm{{Q_{2}}^{\top}{Q_{2}}-I}_{F}$ and residual $\norm{Q_{2}R_{4}-X}_{F}$ for Shifted CholeskyQR), the condition number of $Q$(denoted as $\kappa_{2}(Q)$) and the computational time(CPU time measured in seconds). Additionally, we present the $p$-value, defined as $p=\frac{[X]_{g}}{\norm{X}_{2}}$, to illustrate the extent of improvement brought by our reduced $s$ compared to the original method in \cite{Shifted}. As a comparison group, we also evaluate the properties of HouseholderQR, which is considered one of the most stable numerical algorithms, to demonstrate the effectiveness and advantages of our improved Shifted CholeskyQR3. The specifications of our computer used for these experiments are provided in Table~\ref{tab:C}. We assess the performance of our method in multi-core CPU environments. 

\begin{table}[H]
\begin{center}
\caption{The specifications of our computer}
\centering
\begin{tabular}{c|c}
\hline
Item & Specification\\
\hline \hline
System & Windows 11 family(10.0, Version 22000) \\
BIOS & GBCN17WW \\
CPU & Intel(R) Core(TM) i5-10500H CPU @ 2.50GHz  -2.5 GHz \\
Number of CPUs / node & 12 \\
Memory size / node & 8 GB \\
Direct Version & DirectX 12 \\
\hline
\end{tabular}
\label{tab:C}
\end{center}
\end{table}

\subsection{Numerical examples}
In this part, we introduce the numerical examples, specifically the test matrix $X$ utilized in this work. The primary test matrix $X \in \mathbb{R}^{m\times n}$ is similar to that used in \cite{Shifted, error} and is constructed by SVD. It is straightforward to observe the influence of $\kappa_{2}(X)$, $m$ and $n$ while controlling the other two factors. Additionally, to test the applicability and the numerical stability of our improved Shifted CholeskyQR3, we present two examples widely used in engineering and other fields.

\subsubsection{The input $X$ based on SVD}
We first construct the matrix $X$ for the numerical experiments using Singular Value Decomposition (SVD), similar to the approach described in \cite{Shifted, error}. We control $\kappa_{2}(X)$ through $\sigma_{n}(X)$. Specifically, we set
\begin{equation}
X=U \Sigma V^{T}. \nonumber
\end{equation}
Here, $U \in \mathbb{R}^{m\times m}, V \in \mathbb{R}^{n\times n}$ are random orthogonal matrices and
\begin{equation}
\Sigma = {\rm diag}(1, \sigma^{\frac{1}{n-1}}, \cdots, \sigma^{\frac{n-2}{n-1}}, \sigma) \in \mathbb{R}^{m\times n}. \nonumber
\end{equation}
Here, $0<\sigma<1$ is a constant. Therefore, we have $\sigma_{1}(X)=\norm{X}_{2}=1$ and $\kappa_{2}(X)=\frac{1}{\sigma}$.

\subsubsection{The Hilbert matrix}
The Hilbert matrix $X \in \mathbb{R}^{n\times n}$ is a well-known ill-conditioned matrix. It is widely used in many applications, including numerical approximation theory and solving linear systems, see \cite{Ein, Beckermann, Tricks} and the references therein. As $n$ increases, $\kappa_{2}(X)$ also increases. The Hilbert matrix $X$ is defined as 
\begin{equation}
X_{ij} = \frac{1}{i+j-1}, i,j=1,2,\cdots,n. \nonumber
\end{equation}
We can use $X=hilb(n)$ in MATLAB to receive a Hilbert matrix $X \in \mathbb{R}^{n\times n}$. 

\subsubsection{The arrowhead matrix}
The arrowhead matrix $X \in \mathbb{R}^{n\times n}$ plays an important role in graph theory, control theory and some eigenvalue problems, see \cite{Constructing, Li, Eigen, Accurate, JCP} and the references therein. Its primary characteristic is that all the elements are zero except for those in the first column, the first row and the diagonal. In this work, we define an arrowhead matrix as follows.
\begin{align}
X_{1j} &= 30, j=1,2,\cdots,n, \nonumber \\
X_{ii}&= 10, i=2,3,\cdots,n-1, \nonumber \\
X_{ii} &= 10^{-16}, i=n, \nonumber \\
X_{ij} &= 0, \mbox{others} \nonumber.
\end{align}

\subsection{Numerical stability of the algorithms}
In this section, we test the numerical stability of the algorithms. To assess this, we conduct experiments considering three factors: $\kappa_{2}(X)$, $m$ and $n$ to demonstrate the properties of Shifted CholeskyQR3. For clarity, we refer to our improved Shifted CholeskyQR3 as 'Improved', while the original Shifted CholeskyQR3 is referred to as 'Original'.

To assess the potential influence of $\kappa_{2}(X)$, we obtain $X$ using SVD first. We fix $m=2048$ and $n=64$, varying $\sigma$ to evaluate the effectiveness of our algorithm with different $\kappa_{2}(X)$. The numerical results are listed in Table~\ref{tab:Ok} and Table~\ref{tab:Rk}. Numerical experiments show that our improved Shifted CholeskyQR3 exhibit better orthogonality and residual compared to HouseholderQR, demonstrating strong numerical stability. The numerical stability of our improved algorithm is comparable to that of the original Shifted CholeskyQR3. A key advantage of our improved Shifted CholeskyQR3 over the original one is that our improved algorithm can handle more ill-conditioned $X$ with $\kappa_{2}(X) \ge 10^{12}$. The conservative choice of $s$ in the original Shifted CholeskyQR3 limits its computational range, as reflected in the comparison of $\kappa_{2}(X)$ between \eqref{eq:c2} and \eqref{eq:236}. In our practical example of the Hilbert matrix, we take $n=12$ and $\kappa_{2}(X)=1.62e+16$. In the example of the arrowhead matrix, we take $n=64$ and $\kappa_{2}(X)=3.40e+18$. The numerical results are shown in Table~\ref{tab:Hilbert} and Table~\ref{tab:arrowhead}. They also demonstrate that our improved Shifted CholeskyQR3 has better applicability and is able to handle more ill-conditioned matrices effectively than the original one. 

To examine the influence of $m$ and $n$, we construct $X$ based on SVD while maintaining $\kappa_{2}(X)=10^{12}$. When $m$ is varying, we keep $n=64$. When $n$ is varying, we keep $m=2048$. The numerical results are presented in Table~\ref{tab:Om}-~\ref{tab:Rn}. Our findings indicate that the increasing $n$ leads to greater rounding errors in orthogonality and residual, while $m$ does not impact these aspects significantly. Our improved Shifted CholeskyQR3 maintains a level of the numerical stability comparable to that of the original Shifted CholeskyQR3 and is more accurate compared to HouseholderQR across various values of $m$ and $n$. This set of experiments shows that our improved Shifted CholeskyQR3 is numerical stable across different problem sizes.

Overall, our examples demonstrate that our improved Shifted CholeskyQR3 is more applicable for ill-conditioned matrices without sacrificing numerical stability, performing at a level comparable to the original Shifted CholeskyQR3. In many cases, it even exhibits better accuracy compared to the traditional HouseholderQR.

\begin{table}
\caption{Orthogonality of the algorithms with $\kappa_{2}(X)$ varying when $m=2048$ and $n=64$}
\centering
\begin{tabular}{||c c c c c c||}
\hline
$\kappa_{2}(X)$ & $1.00e+8$ & $1.00e+10$ & $1.00e+12$ & $1.00e+14$ & $1.00e+16$ \\
\hline
Improved & $2.07e-15$ & $2.04e-15$ & $2.03e-15$ & $2.04e-15$ & - \\
\hline
Original & $2.14e-15$ & $2.21e-15$ & $1.90e-15$ & - & - \\
\hline
HouseholderQR & $2.77e-15$ & $2.46e-15$ & $2.48e-15$ & $2.75e-14$ & $2.67e-15$ \\
\hline
\end{tabular}
\label{tab:Ok}
\end{table}

\begin{table}
\caption{Residual of the algorithms with $\kappa_{2}(X)$ varying when $m=2048$ and $n=64$}
\centering
\begin{tabular}{||c c c c c c||}
\hline
$\kappa_{2}(X)$ & $1.00e+8$ & $1.00e+10$ & $1.00e+12$ & $1.00e+14$ & $1.00e+16$ \\
\hline
Improved & $6.35e-16$ & $6.01e-16$ & $5.80e-16$ & $5.64e-16$ & - \\
\hline
Original & $6.67e-16$ & $6.20e-16$ & $6.22e-16$ & - & - \\
\hline
HouseholderQR & $1.26e-15$ & $1.38e-15$ & $1.27e-15$ & $1.27e-15$ & $9.61e-16$ \\
\hline
\end{tabular}
\label{tab:Rk}
\end{table}

\begin{table}
\caption{Numerical results for the Hilbert matrix with $n=12$}
\centering
\begin{tabular}{||c c c|}
\hline
Algorithm & Improved & Original \\
\hline
Orthogonality & $3.59e-15$ & $-$ \\
\hline
Residual & $2.14e-16$ & $-$ \\
\hline
\end{tabular}
\label{tab:Hilbert}
\end{table}

\begin{table}
\caption{Numerical results for the arrowhead matrix with $n=64$}
\centering
\begin{tabular}{||c c c|}
\hline
Algorithm & Improved & Original \\
\hline
Orthogonality & $1.24e-14$ & $-$ \\
\hline
Residual & $1.40e-14$ & $-$ \\
\hline
\end{tabular}
\label{tab:arrowhead}
\end{table}

\begin{table}
\caption{Orthogonality of all the algorithms with $m$ varying when $\kappa_{2}(X)=10^{12}$ and $n=64$}
\centering
\begin{tabular}{||c c c c c c||}
\hline
$m$ & $128$ & $256$ & $512$ & $1024$ & $2048$ \\
\hline
Improved & $3.62e-15$ & $4.07e-15$ & $3.11e-15$ & $2.12e-15$ & $2.03e-15$ \\
\hline
Original & $3.31e-15$ & $3.93e-15$ & $2.89e-15$ & $2.36e-15$ & $1.90e-15$ \\
\hline
HouseholderQR & $6.54e-15$ & $6.35e-15$ & $3.56e-15$ & $2.80e-15$ & $2.48e-15$ \\
\hline
\end{tabular}
\label{tab:Om}
\end{table}

\begin{table}
\caption{Residual of all the algorithms with $m$ varying when $\kappa_{2}(X)=10^{12}$ and $n=64$}
\centering
\begin{tabular}{||c c c c c c||}
\hline
$m$ & $128$ & $256$ & $512$ & $1024$ & $2048$ \\
\hline
Improved & $6.04e-16$ & $5.92e-16$ & $6.08e-16$ & $6.06e-16$ & $5.80e-16$ \\
\hline
Original & $6.09e-16$ & $5.91e-16$ & $5.95e-16$ & $5.86e-16$ & $6.22e-16$ \\
\hline
HouseholderQR & $7.31e-16$ & $9.45e-16$ & $7.55e-16$ & $7.48e-16$ & $1.27e-15$ \\
\hline
\end{tabular}
\label{tab:Rm}
\end{table}

\begin{table}
\caption{Orthogonality of all the algorithms with $n$ varying when $\kappa_{2}(X)=10^{12}$ and $m=2048$}
\centering
\begin{tabular}{||c c c c c c||}
\hline
$n$ & $64$ & $128$ & $256$ & $512$ & $1024$ \\
\hline
Improved & $2.03e-15$ & $3.25e-15$ & $5.29e-15$ & $9.53e-15$ & $1.69e-14$ \\
\hline
Original & $1.90e-15$ & $3.33e-15$ & $5.19e-15$ & $1.66e-15$ & $1.77e-14$ \\
\hline
HouseholderQR & $2.48e-15$ & $4.66e-15$ & $9.39e-15$ & $2.07e-14$ & $5.02e-14$ \\
\hline
\end{tabular}
\label{tab:On}
\end{table}

\begin{table}
\caption{Residual of all the algorithms with $n$ varying when $\kappa_{2}(X)=10^{12}$ and $m=2048$}
\centering
\begin{tabular}{||c c c c c c||}
\hline
$n$ & $64$ & $128$ & $256$ & $512$ & $1024$ \\
\hline
Improved & $5.80e-16$ & $1.07e-15$ & $2.01e-15$ & $3.06e-15$ & $4.32e-15$ \\
\hline
Original & $6.22e-16$ & $1.08e-15$ & $2.04e-15$ & $3.08e-15$ & $4.33e-15$ \\
\hline
HouseholderQR & $1.27e-15$ & $1.76e-15$ & $2.55e-15$ & $3.62e-15$ & $5.00e-15$ \\
\hline
\end{tabular}
\label{tab:Rn}
\end{table}

\subsection{$\kappa_{2}(Q)$ under different conditions}
In this group of experiments, we evaluate the impact of $\kappa_{2}(X)$, $m$ and $n$ on $\kappa_{2}(Q)$ using different values of $s$ for Shifted CholeskyQR3, which is crucial for assessing the applicability of the algorithms. We compare our improved Shifted CholeskyQR3 with the original Shifted CholeskyQR3.

In this group of experiments, we use $X$ based on SVD. Initially, we fix $m=2048$ and $n=64$, varying $\kappa_{2}(X)$ to see the corresponding $\kappa_{2}(Q)$ with different values of $s$ in Shifted CholeskyQR3. The results are listed in Table~\ref{tab:QK}. From Table~\ref{tab:QK}, we can see that $\kappa_{2}(X)$ exhibits a nearly direct proportionality to $\kappa_{2}(Q)$. With an improved smaller $s$, our improved Shifted CholeskyQR3 achieves a smaller $\kappa_{2}(X)$ compared to the original Shifted CholeskyQR3, which is consistent with \eqref{eq:qx} and \eqref{eq:235}.

Next, we test the influence of $m$ and $n$ on $\kappa_{2}(X)$. When varying $m$, we fix $\kappa_{2}(X)=10^{12}$ and $n=64$. For different $n$, we set $\kappa_{2}(X)=10^{12}$ and $m=2048$. The numerical results are listed in Table~\ref{tab:Qm} and Table~\ref{tab:Qn}. These results indicate that when dealing with a tall-skinny matrix $X \in \mathbb{R}^{m\times n}$ with $m>n$, increasing both $m$ and $n$ leads to a larger $\kappa_{2}(Q)$ while keeping $\kappa_{2}(X)$ fixed. This arises from the structures of both our improved $s$ and the original $s$. Across Table~\ref{tab:QK}-~\ref{tab:Qn}, we consistently observe that our method achieves a smaller $\kappa_{2}(Q)$ compared to the original Shifted CholeskyQR3, demonstrating the effectiveness of the improved $s$.

In conclusion, our reduced $s$ in this work results in a smaller $\kappa_{2}(Q)$, enhancing the applicability of our improved Shifted CholesyQR3 compared to the original algorithm. This represents a significant advancement in our research.

\begin{table}
\caption{$\kappa_{2}(Q)$ with $\kappa_{2}(X)$ varying with different $s$ when $m=2048$ and $n=64$}
\centering
\begin{tabular}{||c c c c c c||}
\hline
$\kappa_{2}(X)$ & $1.00e+8$ & $1.00e+10$ & $1.00e+12$ & $1.00e+14$ & $1.00e+16$ \\
\hline
Improved & $358.60$ & $3.37e+04$ & $3.18e+06$ & $3.01e+08$ & - \\
\hline
Original & $1.29e+03$ & $1.29e+05$ & $1.29e+07$ & - & - \\
\hline
\end{tabular}
\label{tab:QK}
\end{table}

\begin{table}
\caption{$\kappa_{2}(Q)$ with $m$ varying using different $s$ when $\kappa_{2}(X)=10^{12}$ and $n=64$}
\centering
\begin{tabular}{||c c c c c c||}
\hline
$m$ & $128$ & $256$ & $512$ & $1024$ & $2048$ \\
\hline
Improved & $9.62e+05$ & $1.24e+06$ & $1.66e+06$ & $2.29e+06$ & $3.18e+06$ \\
\hline
Original & $3.88e+06$ & $5.01e+06$ & $6.72e+06$ & $9.23e+06$ & $1.29e+07$ \\
\hline
\end{tabular}
\label{tab:Qm}
\end{table}

\begin{table}
\caption{$\kappa_{2}(Q)$ with $n$ varying using different $s$ when $\kappa_{2}(X)=10^{12}$ and $m=2048$}
\centering
\begin{tabular}{||c c c c c c||}
\hline
$n$ & $64$ & $128$ & $256$ & $512$ & $1024$ \\
\hline
Improved & $3.18e+06$ & $4.24e+06$ & $5.76e+06$ & $8.11e+06$ & $1.11e+07$ \\
\hline
Original & $1.29e+07$ & $1.84e+07$ & $2.68e+07$ & $4.00e+07$ & $6.20e+07$ \\
\hline
\end{tabular}
\label{tab:Qn}
\end{table}

\subsection{CPU times of the algorithms}
In addition to considering numerical stability and $\kappa_{2}(Q)$, we also need to take into account the CPU time required by these algorithms to demonstrate the efficiency of our improved algorithm. We test the corresponding CPU time with respect to the two variables, $m$ and $n$.

Similar to the previous section, we use $X$ based on SVD. For varying values of $m$, we set $n=64$ and $\kappa_{2}(X)=10^{12}$. When $n$ is varying, we fix $m=2048$ and $\kappa_{2}(X)=10^{12}$. We observe the variation in CPU time for our improved Shifted CholeskyQR3, the original Shifted CholeskyQR3 algorithm and HouseholderQR. The CPU times for these algorithms are listed in Table~\ref{tab:t1m} and Table~\ref{tab:t1n}. Numerical experiments show that both our improved Shifted CholeskyQR3 and the original Shifted CholeskyQR3 are significantly more efficient compared to HouseholderQR, highlighting a primary drawback of the widely-used HouseholderQR. Our improved Shifted CholeskyQR3 exhibits comparable speed to the original Shifted CholeskyQR3 with $normest$. Additionally, $n$ has a greater influence on CPU time compared to $m$. However, as both $m$ and $n$ increase, our improved Shifted CholeskyQR3 maintains a level of efficiency similar to that of the original Shifted CholeskyQR3. Therefore, we conclude that our improved Shifted CholeskyQR3 is an efficient algorithm with good accuracy for problems with moderate sizes.

\begin{table}
\caption{CPU time with $m$ varying (in second) when $\kappa_{2}(X)=10^{12}$ and $n=64$}
\centering
\begin{tabular}{||c c c c c c||}
\hline
$m$ & $128$ & $256$ & $512$ & $1024$ & $2048$ \\
\hline
Improved & $6.90e-04$ & $8.65e-04$ & $1.70e-03$ & $3.80e-03$ & $4.70e-03$ \\
\hline
Original & $2.10e-03$ & $9.55e-04$ & $1.50e-03$ & $4.40e-03$ & $6.20e-03$ \\
\hline
HouseholderQR & $1.21e-02$ & $3.45e-02$ & $3.38e-01$ & $2.00e+00$ & $1.24e+01$ \\
\hline
\end{tabular}
\label{tab:t1m}
\end{table}

\begin{table}
\caption{CPU time with $n$ varying (in second) when $\kappa_{2}(X)=10^{12}$ and $m=2048$}
\centering
\begin{tabular}{||c c c c c c||}
\hline
$n$ & $64$ & $128$ & $256$ & $512$ & $1024$ \\
\hline
Improved & $4.70e-03$ & $1.25e-02$ & $4.66e-02$ & $9.80e-02$ & $3.52e-01$ \\
\hline
Original & $6.20e-03$ & $1.46e-02$ & $4.59e-02$ & $9.02e-02$ & $4.45e-01$ \\
\hline
HouseholderQR & $1.12e+01$ & $2.59e+01$ & $5.66e+01$ & $1.16e+02$ & $3.11e+02$ \\
\hline
\end{tabular}
\label{tab:t1n}
\end{table}

\subsection{$p$-values}
Here, we aim to show the $p$-values in this work by using some examples. Based on Table~\ref{tab:Comparison} and \eqref{eq:p}, we can find that the proportion of our improved $s$ to the original $s$ is $p^{2}$. Therefore, the $p$-value reflects how much the shifted item $s$ is reduced according to our definition of $[\cdot]_{g}$. In the future, we will investigate how to estimate $p$ under different cases.

In this part, we test the $p$-value with varying values of $m$ and $n$ using $X$ based on SVD. With $m$ varying, we fix $n=64$ and $\kappa_{2}(X)=10^{12}$. For different values of $n$, we fix $m=2048$ and $\kappa_{2}(X)=10^{12}$. The numerical experiments are listed in Table~\ref{tab:t2m} and Table~\ref{tab:t2n}. The numerical results indicate that $p$ is relatively small compared to $1$. Notably, $n$ significantly influences $p$ more than $m$. With $n$ increasing, $p$ decreases markedly, which aligns with the theoretical lower bound of the $p$-value. This observation suggests that our improved $s$ is likely more effective for relatively large matrices.

\begin{table}
\caption{$p$ with $m$ varying when $\kappa_{2}(X)=10^{12}$ and $n=64$}
\centering
\begin{tabular}{||c c c c c c||}
\hline
$m$ & $128$ & $256$ & $512$ & $1024$ & $2048$ \\
\hline
$p$ & $0.2824$ & $0.2762$ & $0.2386$ & $0.2453$ & $0.2498$ \\
\hline
\end{tabular}
\label{tab:t2m}
\end{table}

\begin{table}
\caption{$p$ with $n$ varying when $\kappa_{2}(X)=10^{12}$ and $m=2048$}
\centering
\begin{tabular}{||c c c c c c||}
\hline
$n$ & $64$ & $128$ & $256$ & $512$ & $1024$ \\
\hline
$p$ & $0.2498$ & $0.2396$ & $0.2127$ & $0.2024$ & $0.1726$ \\
\hline
\end{tabular}
\label{tab:t2n}
\end{table}

\section{Discussions and Conclusions}
\label{sec:discussions}
This study focuses on determining an optimal choice for the shifted item $s$ based on the properties of the input matrix $X$ for Shifted CholeskyQR3. We introduce a new $[X]_{g}$ for $X$ based on column properties and derive a new smaller $s$ using $[X]_{g}$. We demonstrate that this smaller $s$ provides a better upper bound of $\kappa_{2}(X)$, thereby enhancing the applicability of Shifted CholeskyQR3 while maintaining its numerical stability in terms of both orthogonality and residuals. In terms of computational efficiency, our improved Shifted CholeskyQR3 outperforms the commonly used HouseholderQR method and exhibits a similar CPU time to the original Shifted CholeskyQR3 for moderately sized matrices, demonstrating that our algorithm is effective in terms of speed as a three-step deterministic method.

There are still several issues that need to be addressed in the future. Specifically, $[\cdot]_{g}$ of the matrix warrants further exploration. Developing efficient methods to quickly estimate $[X]_{g}$ for input matrices $X$ remains an open topic for future research, particularly for large matrices. In this work, we calculate $[X]_{g}$ by comparing the $2$-norms of the columns of $X$. However, as the size of the matrix increases, the CPU time and computational cost of this method increase significantly. Therefore, new techniques for estimating $[X]_{g}$ more effectively need to be developed. Moreover, the process of calculating $[X]_{g}$ indicates that parallel computing can be employed to obtain $[\cdot]_{g}$ more efficiently. We are currently developing an estimator that leverages parallel computing to calculate $[\cdot]_{g}$ for the improved Shifted CholeskyQR, which are useful for the improved Shifted CholeskyQR3 on GPU using parallel computing. In this study, we leverage the connections between $[\cdot]_{g}$ and other norms to conduct rounding error analysis. Given that $[\cdot]_{g}$ can be applied to various problems, such as HouseholderQR and Nyström approximation, we aim to explore its relationship with the singular values of the matrix and other factors, such as the condition number. We are also focusing on more properties related to $[\cdot]_{g}$.

Recent years have seen significant advancements in CholeskyQR methodologies, as evidenced by studies such as \cite{LUChol, 2016, Mixed}. While deterministic methods like Shifted CholeskyQR3 offer good accuracy, they are often relatively slow. The choice of the parameter $s$ continues to influence the applicability of the algorithm, even with the improvements proposed in this work. In addition, randomized methods for the CholeskyQR algorithm have been introduced \cite{Randomized, Novel} in recent years. However, all Cholesky-type algorithms face with issues of the sufficient condition for the condition number $\kappa_{2}(X)$ applicable to the input matrix $X$, which limits their practical use in industry. To address this, we are exploring new preconditioning steps designed to balance speed, accuracy, and applicability, thereby enhancing the performance of CholeskyQR-type algorithms.

\section*{Acknowledgments}
This work is supported by the CAS AMSS-PolyU Joint Laboratory of Applied Mathematics. The contributions of H. Guan and Z. Qiao are funded by the Hong Kong Research Grants Council through the RFS grant RFS2021-5S03 and GRF grant 15302122, as well as by the Hong Kong Polytechnic University under grant 4-ZZLS. We would like to express our gratitude to Mr. Yuan Liang from Beijing Normal University, Zhuhai, for his valuable suggestions regarding the coding aspects of this research. Additionally, we appreciate the insightful discussions with Mr. Renfeng Peng from the Chinese Academy of Sciences, Professor Valeria Simoncini and Dr. Davide Palitta from University of Bologna regarding the properties of $[\cdot]_{g}$ and potential future directions in this area. Our thanks also go to Dr. Nan Zheng from the Hong Kong Polytechnic University for her assistance in revising this manuscript. Finally, we are grateful to the two anonymous referees for their constructive feedback, which has contributed to enhancing this work. 

\section*{Conflict of interest}
The authors declare that they have no conflict of interest.

\section*{Data availability}
The authors declare that all data supporting the findings of this study are available within this article.

\bibliographystyle{plain}

\bibliography{references}

\end{document}